\documentclass[lettersize,journal,onecolumn]{IEEEtran}
\usepackage{amsmath,amsfonts,amsthm, amssymb, mathtools}
\usepackage{algorithmic}
\usepackage{algorithm}
\usepackage{array}
\usepackage[caption=false,font=normalsize,labelfont=sf,textfont=sf]{subfig}
\usepackage{textcomp}
\usepackage{stfloats}
\usepackage{url}
\usepackage{verbatim}
\usepackage{centernot}
\usepackage{graphicx}
\usepackage{cite}
\usepackage[numbers]{natbib}
\hyphenation{op-tical net-works semi-conduc-tor IEEE-Xplore}
% updated with editorial comments 8/9/2021

\usepackage{hyperref}

\newtheorem{theorem}{Theorem}
\newtheorem{lemma}{Lemma}

\newtheorem{remark}{Remark}

\newtheorem{prop}{Proposition}

\def\PROB {{\mathbb P}}
\def\EXP {{\mathbb E}}
\def\IND{{\mathbb I}}
\def\Var{\operatorname{Var}}

\def\P{{\cal P}}
\def\Q{{\cal Q}}
\def\T{{\cal T}}

\def\D{{\cal D}}

\def\R{{\mathbb R}}

\def\bX{{\mathbf{X}}}
\def\bx{{\mathbf{x}}}

\newcommand{\set}[1]{\left\{ #1 \right\}}

\usepackage{fancyvrb} % for "\Verb" macro

\newcommand{\KL}{\operatorname{KL}}

\usepackage{xcolor}

% Heading arguments are {volume}{year}{pages}{date submitted}{date published}{paper id}{author-full-names}

\begin{document}

\title{Tree density estimation}

\author{
\IEEEauthorblockN{L\'aszl\'o Gy\"orfi\IEEEauthorrefmark{1}, Aryeh Kontorovich\IEEEauthorrefmark{2}, 
Roi Weiss\IEEEauthorrefmark{3}}\\
    \IEEEauthorblockA{\IEEEauthorrefmark{1}Department of Computer Science and Information Theory,
Budapest University of Technology and Economics,
Budapest, Hungary}\\
    \IEEEauthorblockA{\IEEEauthorrefmark{2}Department of Computer Science, Ben-Gurion University of the Negev, Beer-Sheva, Israel}\\
    \IEEEauthorblockA{\IEEEauthorrefmark{3}Department of Computer Science,
       Ariel University, Shomron, Israel}
}

\maketitle

\begin{abstract}
We study the problem of 
estimating the density
$f(\bx)$
%density estimation for 
of
a random vector ${\bX}$ in $\R^d$.
%with probability density 
For a spanning tree $T$ defined on the vertex set $\{1,\dots ,d\}$, the tree density $f_{T}$ is a product of bivariate conditional densities.
An {\em optimal} spanning tree
minimizes
%$T^*$ is the spanning tree $T$, for which 
the Kullback-Leibler divergence between $f$ and $f_{T}$.
%is the smallest.
From i.i.d.\ data we 
%construct an 
%efficiently 
identify
an optimal tree  $T^*$ and
efficiently construct
%introduce 
a tree density estimate $f_n$ such that, without any regularity conditions on the density $f$, one has 
%that 
$\lim_{n\to \infty} \int |f_n(\bx)-f_{T^*}(\bx)|d\bx=0$ a.s. 
For Lipschitz 
%continuous 
$f$ with bounded support,
$\EXP\left\{ \int |f_n(\bx)-f_{T^*}(\bx)|d\bx\right\}=O\big(n^{-1/4}\big)$,
a dimension-free rate.
\end{abstract}

\begin{IEEEkeywords}
density estimation;
tree identification;
Kruskal's algorithm;
consistency;
rate of convergence
\end{IEEEkeywords}

\section{Introduction}
\label{Intr}

A natural strategy for mitigating the curse of dimensionality in 
estimating
probability distributions
%RW%
is to employ 
%tree-based 
low-complexity family of approximation distributions.
For discrete distributions,
\citet{ChLi68} suggested a family of {\em tree-based} approximations and gave an efficient 
maximum-likelihood estimator
based on 
Kruskal's 
optimal spanning tree
algorithm \citep{kruskal56}.
We stress that this approach makes no
{\em structural assumptions} about the sampling distribution, but rather constitutes a {\em modeling choice}.
Consequently, in this paradigm, 
the goal is to approximate the optimal-tree
distribution
from the data, without any guarantees on how well the latter approximates the true sampling distribution.

%Motivated by the difficulty of density estimation in high dimensions, we study a tree-based low-dimensional
%family of approximation densities.
%Rather than making any structural %assumptions about the sampling %distribution,
%we form histogram density estimates of
%the bivariate and univariate marginals, and apply Kruskal's algorithm \citep{kruskal56} to identify an optimal spanning tree from data.

%Towards the goal of identifying the best spanning tree, we extend the results of 
%Chow and Liu \cite%
%\citet{ChLi68}, 
%who showed how to approximate {\em discrete}
%distributions by products of 
%conditional distributions.

Extensions 
of the Chow-Liu approach
to continuous
distributions were studied by 
%Bach and Jordan 
\citet{BaJo03} 
and by 
%Liu et al. 
\citet{LiXuGuLaWa11}
under various assumptions.
They contructed approximation tree densities, which are products of
 bivariate conditional marginals.
A principal contribution of this paper
is to
%Here we 
introduce identification and estimation procedures
for which
strong
consistency
can be established
%of which is proved 
without {\em any} regularity conditions on the
underlying sampling density.
Our second contribution is to obtain risk rates
under  mild assumptions.

By way of a motivating example,
%As in 
%Horv\'ath et al. 
\citet{HoKoMoNo20}
%, a motivating example can be the modeling 
examine a model
of sensor networks, consisting of $d\gg1$ sensors.
%such that $d$ is large, for example, $d$ is of order of $100$.
The goal here is
%Our aim is 
to estimate an approximation to the $d$-dimensional density of the sensors' measurements.
In the special case of
a linear sensor layout,
the joint distribution is naturally
modeled via
%As an example, consider the special case, %where the sensors are along a road, and it %is
%%obvious
%natural
%to assume that the components of the %measurements form 
a Markov chain.
In this case,
%Then, by the chain rule 
the density is a product of bivariate conditional densities.
In general, we do not assume that the true density has this special form, 
and instead seek
%instead we search for 
the best approximating product of bivariate conditional densities.
Thus,
%It means that 
out of the possible $\binom{d}{2}$  bivariate conditional densities, an approximation of the true density depends only on $d-1$  bivariate conditional densities.
Furthermore, any approximation has bounded complexity:
%, i.e., 
it is given by a spanning tree.

%We expect that this setup can be applied to pattern recognition, detection, multivariate change point detection, anomaly detection, etc.

\subsection*{Formal setup.}

Let ${\bX}=(X_{1},\dots ,X_{d})\in\R^d$ be a $d$-dimensional random vector with probability density $f(\bx)$.
We denote the index set by
%The set of indices of the random variables are %denoted by 
$V=\{1,\dots ,d\}$,
%and
%We denote 
the densities of the one-
and two-dimensional marginals by 
$f_{i}(x_{i})$ and 
%the densities of the two-dimensional marginals by
$f_{i,j}(x_{i},x_{j})$, 
respectively,
for
$i,j\in V$, $i\neq j$.
%In a slight abuse of notation,
%we also denote by $V$
%the complete undirected graph on this vertex set.
A {\em spanning tree}
$T$ on the vertex set $V$
is an undirected graph 
$T=(V,E)$, where $|E|=d-1$;
we denote the set of all spanning trees
by $\T$.
Given the one- and two-dimensional marginals, 
a 
%spanning tree 
$T\in\T$
induces a joint density on
$\bX$ as follows:
%
%Introduce a product-form probability density associated with a spanning tree
%$T$ defined on the vertex set $V$:
\begin{align}
\label{eq:span_tree}
f_{T}(\bx)=\prod\limits_{(i,j)\in T}\frac{f_{i,j}(x_{i},x_{j})}{f_{i}(x_{i})f_{j}(x_{j})}\prod_{i=1}^{d}f_{i}(x_{i}).
\end{align}
Any vertex $k\in V$
may be arbitrarily designated as the {\em root} of $T$.
Once the root is fixed,
%then, 
any
$i\in V\setminus\set{k}$
has a unique {\em parent} $j(i)=j_T(i)\in V$
(the reader is referred to
\citet{cormen-alg-09}
for the relevant graph-theoretic background,
including spanning trees and Kruskal's algorithm).
The parent notation enables expressing
%If $k$ stands for the root of the spanning tree $T$, and for $i\neq k$, $j(i)=j_{T}(i)$ denotes the parent of $i$, then
(\ref{eq:span_tree})
%has the form
more compactly:
\begin{align}
\label{root}
f_{T}(\bx)=\prod\limits_{i\neq k}f_{i|j(i)}(x_{i}\mid
x_{j(i)})f_k(x_{k}),
\end{align}
where
$k$ is the designated root and
the conditional densities are given by
\begin{align*}
f_{i|j(i)}(x_{i}\mid
x_{j(i)}) = \frac{f_{i,j(i)}(x_{i},x_{j(i)})}{f_{j(i)}(x_{j(i)})}.
\end{align*}

As in 
%Devroye and Gy\"orfi 
\citet{DeGy85},  we consider the $L_1$ %error criterion
risk
\begin{align}
\label{L1}
\|f-f_n\|=\int |f(\bx) - f_n(\bx) | d\bx;
\end{align}
this choice of metric is amply motivated therein.
\iffalse
For density estimation, an error criterion (loss function) compares the density and its estimate. An obvious requirement can be that the error criterion is invariant under invertible transformation (for example, under rescaling)  of the densities.
The $\varphi$-divergences of 
%Csisz\'ar 
\citet{Csi67} type have this property, while the $L_p$-distance with $1<p$ does not.
The usual examples of $\varphi$-divergences are $L_1$-distance, Hellinger-distance, Kullback-Leibler divergence, $\chi^2$-divergence, etc.
Interestingly, all $\varphi$-divergence dominates the $L_1$-distance, see  Remark 2 in 
%Gy\"orfi and Krzyzak 
\citet{GyKr11}.
It means that if a density estimate is consistent in a  $\varphi$-divergence, then it is $L_1$-consistent.
Concerning density estimation consistent in KL-divergence, it turned out
that 
this desideratum is rather stringent:
%
%the KL-divergence is very demanding, which means %that 
even a discrete
distribution with finite Shannon entropy cannot in general  be estimated consistently in
KL-divergence
%, cf. 
\citep{GyPavdM94}. If there is a known density $h$ dominating the density $f$ in KL-divergence,
then a modified histogram is KL-consistent
%, see 
\citep{BaGyvdM92}. 
The domination means, that the KL-divergence of $f$ and $h$ is finite.
\fi

This paper investigates
the problem
%in this paper is to find 
of finding
a spanning tree $T$ 
on the 
%graph $(V$
vertex set $V$
and 
establishing that
%estimating 
its induced probability density $f_{T}(\bx)$ 
%such that the latter
approximates the 
true
%probability 
density $f(\bx)$ 
optimally or nearly so, under some criterion.
%as well as possible.
There are several
candidates for a goodness-of-fit measure,
perhaps the most natural of which
is the $L_1$ metric:
%The goodness of fit can be quantified by several measures.
%Here the obvious aim would be to identify the %set of best trees with respect to the $L_1$ %error:
\begin{align*}
\T_{L_1}=
%\set{
%T\in \T:%
%}
%
\left\{\bar T\in\T: \|f-f_{\bar T}\|=\min_
{T\in\T} \|f-f_{T}\|\right\}
\end{align*}
($\T$ being a finite set, the minimum is always achieved).
The shortcoming of the $L_1$ criterion is
that minimizing it over $T\in\T$ appears to
be a computationally hard problem,
with 
no known efficient approximation algorithm.
%This is a hard optimization problem such that there is no known algorithm of small computational complexity.
Therefore, in line with the original Liu-Chow approach,
we adopt
%For the identification of the best tree, we consider 
the Kullback-Leibler (KL) divergence
as our goodness-of-fit criterion.
The KL-divergence between $f(\bx)$ and $f_{T}(\bx)$ is defined by
\begin{align*}
\KL(f,f_{T})=\int f(\bx)\log \frac{f(\bx)}{f_{T}(\bx)}d\bx,
\end{align*}
while the set of optimal spanning trees is
\begin{align*}
\T_{\KL}=\left\{T^*\in\T: \KL(f,f_{T^*})=\min_T \KL(f,f_{T})\right\}.
\end{align*}
%\AK{stopped here}
Our goal is to identify a spanning tree $T^*$ belonging to the set $\T_{\KL}$ and to estimate a best
tree density $f_{T^*}$ from data.
\iffalse
%\RW{Not clear ``best'' with respect to which criterion. If I understand correctly Laci's comment below, we currently don't know if we get best approximation either in $L^1$ or KL. Right?}
%\LG{Yes. This is why Devroye wanted to have a purely $L_1$ algorithm. He can manage only the case of perfect approximation. Furthermore, his idea is not practical, at all.}

%\LG{Guys, here is a hard question to answer: Is there an $f$ and $T^*\in \T_{\KL}$ such that $T^*\notin \T_{L_1}$? Should we formulate a related (positive or negative) conjecture?
%In the next page I introduce the excess approximation error. The above question means that is the excess approximation error positive?}
%
%\RW{So this is indeed an interesting question. I %started to think what happens for Gaussians where %we have closed form losses.}
%\LG{Counterexample?}
%\RW{Hopefully.}

\bigskip
\fi

To present the Chow-Liu approach, let
us assume, for the moment, that
the density $f$ is known.
%If the density $f$ is known, then we may follow the line of 
%Chow and Liu 
%\citet{ChLi68}.
Consider the undirected complete graph defined on the set of vertices $V$.
The edges of the graph are weighted by the mutual information of the two-dimensional marginal probability distributions corresponding to the two vertices connected.
One can check that
\begin{align}
\label{eq:diff}
\KL(f,f_{T})=\sum_{i=1}^{d}H(X_{i})-\sum_{(i,j)\in T}I(X_{i},X_{j})-H(\bX),
\end{align}
where $I(X_{i},X_{j})$ is the mutual information defined by
\begin{align}
\label{eq:mut_inf}
I(X_{i},X_{j})=\int \int f_{i,j}(x_{i},x_{j})\log \frac{f_{i,j}(x_{i},x_{j})}{f_i(x_{i})f_j(x_{j})}dx_{i}dx_{j},
\end{align}
and $H(\cdot )$ denotes the differential entropy of a random variable or a
random vector
%, see Bach and Jordan 
\citep{BaJo03},
defined by
$H(\bX)=H(f)=\int f(\bx)\log\frac1{f(\bx)}d\bx$.
Thus, $\KL(f,f_{T})$ is minimal when we take edges from the complete graph
along the spanning tree having the maximum weight, i.e.,
 $T^*\in \T_{\KL}$ such that
\begin{align*}
T^*
\in
\mathop{\rm arg\, max}_{T}\left( \sum_{(i,j)\in T}I(X_{i},X_{j})\right) .
\end{align*}%

An optimal spanning tree $T^*$ can be obtained by applying Kruskal's algorithm
\citep{kruskal56}
to the 
weighted
complete graph on $V$
described above.
%such that the edges $(i,j),i,j\in V,i\neq j,$ are weighted by $I(X_{i},X_{j})$.
The latter is a simple
method for constructing an optimum spanning tree in an undirected weighted graph, with time complexity $O(d^2\log d)$.
%\AK{do we mean to be saying ``edges'' rather than ``branches''?}
%\LG{Yes. You may change it.}
To choose a tree of maximum total edge weight, we
first index the $d(d - 1)/2$ edges according to decreasing
weights $\{b_i\}$, so that $b_i\ge b_j$ whenever $i < j$. 
We then start by selecting $b_1$ and $b_2$, and add $b_3$ if $b_3$ does not form
a cycle with $b_1$ and $b_2$. We continue to consider
edges of successively higher indices, selecting an edge
whenever it does not form a cycle with the set previously
selected, and rejecting it otherwise. 
In general
$T^*$ is not unique;
it is, however,
when all of the $I(X_{i},X_{j})$ are distinct
for $i\neq j$.
%, then $T^*$ is unique.
Furthermore, $T^*$ depends only on the ordering of the set of mutual informations, $\{I(X_{i},X_{j}),i\neq j\}$.

%\bigskip

In the setting of this paper, the density
$f$ is not known. Instead,
$n$ independent copies of $\bX$ 
(the ``data'')
are drawn:
%If the density $f$ is unknown, then assume the data
\begin{align*}
\D_{n}=(\bX^{1},\dots ,\bX^{n})
.
\end{align*}%
%such that $\bX^{k},k= 1,\ldots ,n $ are
%independent and identically distributed (i.i.d.)\ %random vectors, distributed
%as $\bX$.

%\AK{stopped here}

\citet{TaAnWi10} studied in detail the case when $\bX$ has a multivariate normal distribution.
In this case $I(X_i,X_j) = -\frac{1}{2} \log(1-\rho_{ij}^2)$ where $-1\leq \rho_{ij}\leq 1$ is the correlation coefficient.
In their paper the density $f$ is not arbitrary, it is a tree density $f_{T}$ and an empirical identification algorithm of this tree $T$ is introduced.
If the covariance matrix for the density $f=f_{T}$ has full rank, then the identification error probability has exponential rate of convergence.
Section 8 in \citet{BaJo03} is on stationary Gaussian time series, for which 
$\rho_{ij}=r_{|i-j|}$ with some $r_1,\dots ,r_{d-1}$.
If $|r_1|>|r_{j}|$ for all $j>1$, then the optimal spanning tree $T^*$ is a chain consisting of the edges $(1,2),(2,3),\dots ,(d-1,d)$.
Interestingly, the correlations $r_{j}$ for $j>1$ don't matter.
For general density, this particular spanning tree $T^*$ appears, when $X_1,\dots ,X_d$ is a Markov chain.

%Liu et al. 
\citet{LiXuGuLaWa11} 
considered a more general problem: namely,
%i.e., they studied 
the forest density estimate.
For identifying the best forest and for estimating the corresponding forest density, they 
proposed a kernel-based approach.
%applied kernel principles.
If the bivariate and univariate densities are H\"older continuous and they are bounded away from zero (called {\em strong density assumption}), then 
under the additional assumption that 
$I(X_{i},X_{j})$ are finite and distinct for $i\neq j$,
the identification of the best
forest is consistent.
Note that the strong density assumption excludes 
many densities of interest, including
Gaussian densities. 
In addition, \citet{LiXuGuLaWa11} give bounds on the rate of convergence of forest density estimate in terms of KL-divergence.
For example, in case of Lipschitz density 
that satisfies the strong density condition, they show that
the excess KL-risk is of order 
\begin{align*}
O\left(\ln n /n^{1/4}\right),
\end{align*}
see Theorem 9 therein. 

\subsection*{Paper overview and main results.}
The main aim of this paper is to avoid the strong density assumption and the assumption that the $I(X_i,X_j)$, $i\neq j$, are distinct.
We introduce a tree estimate $T_n$ and a corresponding tree-density estimate $f_n$ such that without any regularity condition on the density $f$ one has that
\begin{align*}
\lim_{n\to \infty} \int |f_n(\bx)-f_{T_n}(\bx)|d\bx=0
\end{align*}
a.s. (Theorem \ref{Thm:L1}). 
Furthermore, for Lipschitz continuous $f$ with bounded support,
\begin{align*}
\EXP\left\{ \int |f_n(\bx)-f_{T_n}(\bx)|d\bx\right\}
=O\left(n^{-1/4}\right)
\end{align*}
independently on the dimension $d$, (Theorem \ref{Thm:rate}).

For a best spanning tree $T^*$, we have an approximation error:
\begin{align*}
\|f-f_{T^*}\|.
\end{align*}
Pinsker's inequality implies an upper bound:
\begin{align*}
\|f-f_{T^*}\|^2/2\le \KL(f_{T^*},f),
\end{align*}
therefore
\begin{align*}
\|f-f_{T^*}\|\le \sqrt{2\KL(f_{T^*},f)}.
\end{align*}
Using the formula (\ref{eq:diff}), we can estimate $\KL(f_{T^*},f)$. For the identification step, the term $\sum_{(i,j)\in T^*}I(X_{i},X_{j})$ is already estimated, while we can estimate $\sum_{i=1}^{d}H(X_{i})$ and $H(\bX)$ by Kozachenko-Leonenko algorithm.
If $\bar T$ and $T^*$ stand for $L_1$-optimal tree and for KL-optimal tree, respectively, then the previous argument implies a bound on the excess approximation error: 
\begin{align}
\label{KLL1}
0
&\le
\|f-f_{T^*}\|-\|f-f_{\bar T}\|
\le 
\sqrt{2\KL(f_{T^*},f)}-\|f-f_{\bar T}\|
\le 
\sqrt{2\KL(f_{\bar T},f)}-\|f-f_{\bar T}\|.
\end{align}
For perfect approximation, we have $\|f-f_{\bar T}\|=0$, which yields $\KL(f_{\bar T},f)=0$, and 
so (\ref{KLL1}) implies $\|f-f_{T^*}\|=0$, too. 

It is important to characterize the distribution of $\bX$, where
\begin{align*}
\KL(f,f_{T^*})=\min_T \KL(f,f_{T})
\end{align*}
is small. For example, if there is a permutation of the components of $\bX$ such that in this ordering the components form a first order Markov process, then $\KL(f_{T^*},f)=0$.
In general, if $\KL(f_{T^*},f)=0$, then without any regularity condition on the underlying density, our algorithm identifies such perfect spanning tree $T^*$.

An important application of this setup is the example of sensor network, where the sensors are geographically (arbitrary, squared lattice, hexagonal lattice, etc.) distributed. 
We can assume that the mutual information of the neighboring sensors are dominating, and therefore one has to estimate only $c\cdot d$ mutual information $I(X_i,X_j)$ (instead of $\binom{d}{2}$), where $c\approx 4$.
In the Gaussian case mentioned above the correlation matrix has only $c\cdot d$ non negligible elements.
In this sense the correlation matrix is sparse.

\section{The identification of the best spanning tree $T^*$}

\label{Mut}

Using the data $\D_{n}$, we shall construct estimates $I_{n}(X_{i},X_{j})$. Based on these estimates, introduce the empirically
best spanning tree $T_n$:
\begin{align}
\label{Tn}
T_{n}=\mathop{\rm arg\, max}_{T}\left( \sum_{(i,j)\in
T}I_{n}(X_{i},X_{j})\right) .
\end{align}

In order to have universally consistent identification of a best tree, we need consistent mutual information estimates without any regularity
assumption on the underlying density.

Let $(X,Y)$ be a random vector taking values in $\R^{2}$ with
probability density function $f_{XY}(x,y)$ and with marginal densities
$g_{X}(x)$, $g_{Y}(y)$.
The aim is to estimate the mutual information
\begin{align*}
I(X,Y)=\int \int f_{X,Y}(x,y)\log \frac{f_{X,Y}(x,y)}{g_{X}(x)g_{Y}(y)}dxdy
\end{align*}
such that the estimate is strongly consistent without any regularity
condition on the density $f_{X,Y}$.
Assume having the i.i.d.\ data $(X_{1},Y_{1}),\dots ,(X_{n},Y_{n})$.
Several estimates of mutual information have been considered. Most of them are based on density estimates,
from which the consistency of differential entropies estimates is derived.

%Gy\"orfi and van der Meulen 
\citet{GyvdM87} considered histogram-based consistent estimators for differential entropy, which in turn can be used to estimate the mutual information.
The histogram based estimate of mutual information is defined as follows:
Let $\P_{n}$ and $\Q_{n}$ be finite or infinite partitions
of ${\R}$, and denote by $\mu _{n}$ the empirical distribution of  $(X_{1},Y_{1}),\dots ,(X_{n},Y_{n})$:
\begin{align*}
\mu _{n}(A\times B)=\frac 1n \sum_{i=1}^n\IND_{\{X_{i}\in A,Y_{i}\in B\}},
\end{align*}
Set
\begin{align}
\label{est}
I_{n}(X,Y)
=I_{n}(\mu_n,\mu_{n,1}\times \mu_{n,2})
=\sum_{A\in \mathcal{P}_{n},B\in \mathcal{Q}_{n}}
\mu_{n}(A\times B)\log \frac{\mu _{n}(A\times B)}{\mu _{n,1}(A)\mu _{n,2}(B)},
\end{align}
where
\begin{align*}
\mu _{n,1}(A)=\mu _{n}(A\times {\mathbb{R}})
\end{align*}
and
\begin{align*}
\mu _{n,2}(B)=\mu _{n}({\mathbb{R}}\times B).
\end{align*}
%Barron, Gy\"orfi and van der Meulen 
\citet*{BaGyvdM92} showed the following: if $\P_{n}=\Q_{n}$ is the uniform partition with bin width $h'_n\to 0$, $nh'^2_n\to \infty$ and $I(X,Y)<\infty$, then
\begin{align}
\label{vdM}
\lim_{n\rightarrow \infty }I_{n}(X,Y)=I(X,Y)
\end{align}
a.s.
%Wang, Kulkarni, Verdu 
\citet*{WaKuVe05} and 
%Silva, Narayanan 
\citet{SiNa10}
introduced and studied data-driven, partitioning-based estimate of the mutual
information.

%Now, turn back to 
Let us now return to
the problem of identifying $T^*\in\T_{\KL}$. 
Construct estimates $I_{n}(X_{i},X_{j})$ as in (\ref{est}) and select $T_{n}$ according
to (\ref{Tn}).
For identifying $T^*$, one has to generate $\binom{d}{2}$ mutual information estimate, therefore we have to use a mutual information estimate of small computational complexity.
The estimate (\ref{est}) has the smallest computational complexity among the algorithms mentioned before.
The error of the tree density estimate has two components: error of the identification and the error of the bivariate density estimates. It will turn out that the second error dominates the first one.

\section{Estimating the best  tree density}

\label{Dens}

In this section we study the estimation problem of a best approximating density $f_{T^*}$.
The aim is to introduce a density estimate $f_n$ such that
\begin{align*}
\lim_n\int |f_{T^*}(\bx)-f_n(\bx)|d\bx=0
\end{align*}
a.s. without any regularity conditions on the density $f$ of $\bX$.

One may estimate the original density $f$, for example by the ordinary histogram rule with bin width $h_n>0$,
% can be applied,
for which the consistency conditions are $nh_n^d\to\infty$ and $h_n\to 0$.
In applications where $d$ is large,
%If $d$ is large, then in a practical problem
we typically do not have a sufficiently large sample, i.e., $nh_n^d$ is not sufficiently large.
This is the main motivation why the estimation of the best approximating density $f_{T^*}$ is considered.

For a spanning tree $T_n$, we construct the density estimate $f_n$  by estimating the conditional densities $f_{i\mid j(i)}(x_{i}\mid x_{j(i)})$.
As in  
%Gy\"orfi and Kohler 
\citet{GyKo07}, we estimate the conditional densities by the ratio of histograms.
For $1\leq k\leq n$
denote the $k$th sample vector
%With the notation
by $\bX_k=(X^{(k)}_1,\dots ,X^{(k)}_d)$
% for $k=1,\dots,n$
and
let $\mu_{n,i,j(i)}$ and $\mu_{n,j(i)}$ be the empirical distributions for the samples $((X^{(1)}_i,X^{(1)}_{j(i)}),\dots ,(X^{(n)}_i,X^{(n)}_{j(i)}))$
and $(X^{(1)}_{j(i)},\dots ,X^{(n)}_{j(i)})$, respectively, i.e.,
\begin{align*}
\mu _{n,i,j(i)}(A\times B)=\frac 1n \sum_{k=1}^n\IND_{\{X^{(k)}_i\in A,X^{(k)}_{j(i)}\in B\}}
\end{align*}
and
\begin{align*}
\mu _{n,j(i)}( B)=\frac 1n \sum_{k=1}^n\IND_{\{X^{(k)}_{j(i)}\in B\}}.
\end{align*}

To simplify the analysis,
%In order to make the derivations simple,
we renumber the vertex set $V=\{ 1,\dots ,d\}$ such that for any $1\le i<d$, the vertex subset $\{ i,\dots ,d\}$ corresponds to a subtree of $T_n$ with $i$ being a leaf
and its parent satisfies $j(i)>i$.
In particular, $d$ is the root of the tree and the vertices are ordered by their distance from the root.
%With this trick
Let $\P_n$ denote uniform partitions of $\R$ with bin width $h_n$.
For $x_i\in A\in \P_n$, $x_{j(i)}\in B\in \P_n$ and $i=1,\dots ,d-1$,
put
\begin{align*}
f_{n,i\mid j(i)}(x_{i}\mid x_{j(i)})
&=\frac{\mu_{n,i,j(i)}(A\times B)}{h_n\mu_{n,j(i)}(B)}
\end{align*}
with $0/0=0$ by definition and for $x_{d}\in A\in \P_n$,
\begin{align*}
f_{n,d}(x_{d})
&=\frac{\mu_{n,{d}}(A)}{h_n},
\end{align*}
where $d$ is the root of the spanning tree $T_n$.
Set
\begin{align}
\label{histo}
f_n(\bx)
&=
\prod_{i< d}f_{n,i\mid j(i)}(x_{i}\mid x_{j(i)})f_{n,d}(x_{d}).
\end{align}

\section{Consistency and rate of convergence}

Our first result is the density-free strong consistency:

\begin{theorem}
\label{Thm:L1}
Assume that all $I(X_{i},X_{j})$ are finite for $i\neq j$.
If  $h'_n\to 0$, $nh'^2_n\to \infty$, $h_n\to 0$ and $nh_n^2/\log n\to\infty$,  then
\begin{align}
\label{Tas}
\lim_{n\rightarrow \infty }\IND_{T_{n}\in \T_{\KL}}=1
\end{align}
a.s., i.e., 
almost surely,
\begin{align*}
T_{n}\in \T_{\KL}
\end{align*}
for all sufficiently large sample size $n$.
Furthermore,
\begin{align*}
\lim_{n\to \infty} \int |f_n(\bx)-f_{T_n}(\bx)|d\bx=0
\end{align*}
a.s.
\end{theorem}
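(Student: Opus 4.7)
The statement splits into two parts: (i) correct tree identification, $T_n\in\T_{\KL}$ eventually a.s.; (ii) $L_1$-consistency of $f_n$ against $f_{T_n}$. For (i), I would invoke the a.s.\ convergence $I_n(X_i,X_j)\to I(X_i,X_j)$ of the histogram mutual-information estimates from (\ref{vdM}) (\citet{BaGyvdM92}), applied simultaneously to the $\binom{d}{2}$ pairs. Since $\T$ is finite, there is a strictly positive gap between $\max_{T\in\T}\sum_{(i,j)\in T}I(X_i,X_j)$ and the corresponding sum over any $T\in\T\setminus\T_{\KL}$. By uniform a.s.\ convergence over the finite index set of pairs, this gap persists in the empirical sums for all sufficiently large $n$, forcing the Kruskal maximizer $T_n$ into $\T_{\KL}$ a.s.\ eventually.

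For (ii), I would prove by induction on the number $d$ of vertices that for any fixed spanning tree $T^*$ (renumbered as in Section~\ref{Dens} so that vertex $1$ is a leaf with parent $j(1)\in\{2,\dots,d\}$), $\int|f_n-f_{T^*}|\,d\bx\to 0$ a.s. The base case $d=2$ reduces to $\int|\hat f_{n,1,2}-f_{1,2}|\to 0$ a.s., the classical $L_1$-consistency of the bivariate histogram under $h_n\to0$, $nh_n^2/\log n\to\infty$, which requires no regularity of $f$. For the inductive step, let $f_n'$ and $f_{T^*}'$ denote the tree-density estimate and true tree density on the subtree with vertices $\{2,\dots,d\}$. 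Two identities are pivotal: $\int f_n(\bx)\,dx_1=f_n'(\bx_{\setminus 1})$ almost everywhere (the marginal of a tree-density estimate over a leaf is the tree-density estimate on the subtree), and $\hat f_{n,j(1)}(y)\,f_{n,1\mid j(1)}(x_1\mid y)=\hat f_{n,1,j(1)}(x_1,y)$ (conditional-times-marginal of the histograms equals the joint histogram). Using the telescoping split
\begin{align*}
f_n-f_{T^*}=\bigl(f_{n,1\mid j(1)}-f_{1\mid j(1)}\bigr)f_n'+f_{1\mid j(1)}\bigl(f_n'-f_{T^*}'\bigr),
\end{align*}
integrating over $x_1$ first (using $\int f_{1\mid j(1)}(x_1\mid y)\,dx_1=1$), marginalizing $f_n'$ over $\bx_{\setminus\{1,j(1)\}}$ down to $\hat f_{n,j(1)}$, and then adding and subtracting $f_{1,j(1)}=f_{j(1)}f_{1\mid j(1)}$, I obtain the clean additive bound
\begin{align*}
\int|f_n-f_{T^*}|\,d\bx\le\int|\hat f_{n,1,j(1)}-f_{1,j(1)}|+\int|\hat f_{n,j(1)}-f_{j(1)}|+\int|f_n'-f_{T^*}'|.
\end{align*}
The first two terms vanish a.s.\ by the one- and two-dimensional histogram $L_1$-consistency under the bandwidth condition on $h_n$, and the last by the inductive hypothesis.

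To conclude, I combine the two parts: since $\T_{\KL}$ is finite and contains $T_n$ for all large $n$ a.s., partitioning the sample space into the finitely many events $\{T_n=T\}_{T\in\T_{\KL}}$ and applying (ii) on each event yields $\int|f_n-f_{T_n}|\,d\bx\to 0$ a.s. The main obstacle I anticipate is the careful bookkeeping around the $0/0:=0$ convention at empty histogram bins---specifically, verifying that the two pivotal identities continue to hold on bins of zero empirical mass, so that the pointwise a.e.\ marginalization $\int f_n\,dx_1=f_n'$ really does let the induction close. Once that accounting is in place, the telescoping bound above and the classical histogram $L_1$-consistency deliver the claim with no regularity assumption on $f$.
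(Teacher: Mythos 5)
Your part (i) coincides with the paper's Proposition~\ref{pTcons}: the same finite gap $\delta$ and the a.s.\ convergence of the histogram mutual-information estimates from \eqref{vdM}. Your part (ii), however, is a genuinely different route from the paper's Proposition~\ref{pfcons}. The paper proves \emph{pointwise} a.s.\ convergence $f_n(\bx)\to f_{T^*}(\bx)$ for a.e.\ $\bx$ via a generalized Lebesgue-density theorem plus Bernstein and Borel--Cantelli (Lemma~\ref{hist}), and then upgrades pointwise to $L_1$ convergence using the Scheff\'e-type identity $\int|f_n - f_{T^*}| = 2\int(f_{T^*}-f_n)_+$ together with dominated convergence. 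You instead telescope directly at the $L_1$ level: the decomposition $f_n-f_{T^*}=(f_{n,1\mid j(1)}-f_{1\mid j(1)})f_n'+f_{1\mid j(1)}(f_n'-f_{T^*}')$, the algebraic identity $f_{n,1\mid j(1)}\cdot\hat f_{n,j(1)}=\hat f_{n,1,j(1)}$, and the marginalization property of tree-density estimates reduce $\int|f_n-f_{T^*}|$ by induction to a finite sum of one- and two-dimensional histogram $L_1$ errors, which vanish a.s.\ by the classical Abou-Jaoude/Devroye--Gy\"orfi result. Both routes are correct, and the $0/0$ bookkeeping you flag does close: when $\mu_{n,j(1)}(B)=0$ one also has $\mu_{n,1,j(1)}(A\times B)=0$ and $f_n'=0$ on that slab, so both pivotal identities hold a.e.; and the marginal of $f_n'$ over any single coordinate equals the corresponding univariate histogram because $\hat f_{n,i\mid j}\hat f_{n,j}=\hat f_{n,j\mid i}\hat f_{n,i}$ (both equal $\hat f_{n,i,j}$), allowing re-rooting of the estimate just as for the true $f_{T^*}$. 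Your approach buys two things: it needs only $nh_n^2\to\infty$ rather than $nh_n^2/\log n\to\infty$ (the $\log$ factor exists in the paper solely to drive Borel--Cantelli for pointwise a.s.\ consistency, and is superfluous for $L_1$ a.s.\ consistency of the histogram), and it is essentially the same telescoping scheme the paper later deploys for the rate in Proposition~\ref{pfrate}, so you are unifying the consistency and rate analyses under one decomposition rather than using two disparate tools.
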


Notice that the dimension $d$ does not appear in any of the conditions above, which allows for a dimension-free consistency.

The consistency result of Theorem \ref{Thm:L1} holds without any regularity conditions on the density $f$.
Without such conditions, the rate at which the $L_1$ error converges to zero can be arbitrarily slow
%, see 
\citep{DeGy85}.
So in order to obtain non-trivial rates of convergence, one needs
to impose some regularity condition on $f$.

We say that a function $g:\R^k\to\R$ satisfies the \emph{Lipschitz condition} 
with respect to the Euclidean norm $\|\cdot \|$
if 
for all $\bx,\bx'\in \R^k$,
$$
|g(\bx)-g(\bx')| \leq L \|\bx-\bx'\|.
$$

\begin{theorem}
\label{Thm:rate}
%Assume that 
%\textcolor{blue}{
%\begin{align}
%\label{eq:int_cond}
%\int \int f_{i,j}(x_i,x_j)\left(\log \frac{f_{i,j}(x_i,x_j)}{f_{i}(x_i)f_{j}(x_j)} \right)^2 %dx_idx_j<\infty .
%\end{align}} for $i\neq j$.
%
If  $\bX$ is bounded,
the one-dimensional marginals  $\{f_{i}\}$ and  the two-dimensional marginals  $\{f_{i,j}\}$
 satisfy the Lipschitz condition, and $nh'^2_n\to \infty$, then
\begin{align}
\label{rate}
\EXP\left\{ \int |f_n(\bx)-f_{T_n}(\bx)|d\bx\right\}
=O\left(1/\sqrt{nh_n^2}\right)+O(h_n)+O({h'}^{\gamma}_n)+O\left(\frac{1}{n{h'}_n^2}\right)
\end{align}
with all $\gamma>0$.
\end{theorem}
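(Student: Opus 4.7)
The plan is to split $\EXP\int|f_n-f_{T_n}|\,d\bx$ into (i) a histogram-error term, bounded by a telescoping product along $T_n$, which will produce the $O(h_n)$ and $O(1/\sqrt{nh_n^2})$ rates; and (ii) a tree-identification term, which will produce the $O({h'}_n^{\gamma})$ and $O(1/(n{h'}_n^2))$ rates from the probability that $T_n\notin\T_{\KL}$, controlled by the mutual-information convergence rate used in Theorem~\ref{Thm:L1}.

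For (i), fix a tree $T$ and adopt the convention $j(i)>i$; abbreviate $\hat\phi_i:=f_{n,i\mid j(i)}$ and $\phi_i:=f_{i\mid j(i)}$. I apply the two-level telescoping
\begin{align*}
f_n-f_T=(f_{n,d}-f_d)\prod_{i<d}\hat\phi_i+f_d\sum_{k=1}^{d-1}\prod_{i<k}\hat\phi_i\,(\hat\phi_k-\phi_k)\prod_{k<i<d}\phi_i.
\end{align*}
The root term is handled by integrating $x_1,\dots,x_{d-1}$ in increasing order and using the key inequality $\int\hat\phi_i(x_i\mid x_{j(i)})\,dx_i\le 1$ (which is in $\{0,1\}$ directly from the definition of the conditional histogram), giving a bound of $\int|f_{n,d}-f_d|\,dx_d$. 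For the $k$-th telescoping term, I interchange the order of integration to integrate $x_1,\dots,x_{k-1}$ first, so that $\int\prod_{i<k}\hat\phi_i\,dx_1\cdots dx_{k-1}\le 1$ as a function of the remaining variables; the surviving factor $f_d\prod_{k<i<d}\phi_i$ is exactly the true tree density on the subtree $\{k+1,\dots,d\}$, hence integrating out $x_{k+1},\dots,x_d$ except $x_{j(k)}$ leaves the true marginal $f_{j(k)}(x_{j(k)})$.

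This yields
\begin{align*}
\int|f_n-f_T|\,d\bx\le\int|f_{n,d}-f_d|\,dx_d+\sum_{k=1}^{d-1}\int|\hat\phi_k-\phi_k|(x_k,x_{j(k)})\,f_{j(k)}(x_{j(k)})\,dx_k dx_{j(k)}.
\end{align*}
For each summand I use the bivariate-histogram identity $\hat\phi_k f_{n,j(k)}=f_{n,k,j(k)}$ and the decomposition
\begin{align*}
|\hat\phi_k-\phi_k|\,f_{j(k)}\le\hat\phi_k\,|f_{j(k)}-f_{n,j(k)}|+|f_{n,k,j(k)}-f_{k,j(k)}|,
\end{align*}
combined with $\int\hat\phi_k\,dx_k\le 1$, to bound it by $\int|f_{n,j(k)}-f_{j(k)}|\,dx_{j(k)}+\int|f_{n,k,j(k)}-f_{k,j(k)}|\,dx_k dx_{j(k)}$. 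Standard histogram $L_1$-rates under bounded support and Lipschitz marginals then give $\EXP\int|f_{n,i,j}-f_{i,j}|=O(h_n)+O(1/\sqrt{nh_n^2})$ (Lipschitz bias $+$ bin-counting variance) and the analogous $O(h_n)+O(1/\sqrt{nh_n})$ for the univariate case; summing at most $2(d-1)+1$ terms over the edges of $T_n$ absorbs a dimension-dependent factor into the constants.

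The hard part is the integration step that produces $f_{j(k)}(x_{j(k)})$ as the envelope: one must track that a factor $\hat\phi_i$ with $i<k$ may condition on $x_l$ for any $l\ge k$, including possibly $x_k$ or $x_{j(k)}$ themselves, so both the order of integration (smallest index first among the $\hat\phi$'s) and the uniform bound $\int\hat\phi_i\,dx_i\le 1$ are essential to prevent any blow-up. Finally, the $h'_n$-terms are obtained by splitting the expectation on $\{T_n\in\T_{\KL}\}$: on this event the bound above applies to a specific optimal tree $T^*$, while on its complement I use the trivial bound $\int|f_n-f_{T_n}|\le 2$ and invoke the mutual-information convergence used in the proof of Theorem~\ref{Thm:L1}, which gives $\PROB(T_n\notin\T_{\KL})=O({h'}_n^{\gamma})+O(1/(n{h'}_n^2))$.
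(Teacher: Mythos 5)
Your treatment of the density-estimation part (the paper's Proposition~\ref{pfrate}) is essentially correct and uses the same telescoping decomposition, leaf-first integration order, and subtree-marginalization trick as the paper, but with a genuinely different technical step at the point where the conditional-density error $|\hat\phi_k-\phi_k|f_{j(k)}$ must be bounded. You use the triangle inequality together with the identity $\hat\phi_k f_{n,j(k)}=f_{n,k,j(k)}$ to reduce to a univariate plus a bivariate histogram $L_1$ error; the paper instead multiplies and divides to fit the form of Lemma~\ref{lem:L1_bound} (the normalization lemma of Lee--Gray--Tseng), obtaining $2\int|g_n-f_{k,j(k)}|$. Your route is slightly more elementary and avoids the normalization lemma entirely, at the minor cost of an extra univariate histogram term, which is dominated by the bivariate one, so the rate is the same. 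Your attention to $\int\hat\phi_i\,dx_i\le 1$ rather than $=1$ is actually more careful than the paper's write-up at the corresponding step.

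However, there is a genuine gap in the tree-identification half. You assert $\PROB(T_n\notin\T_{\KL})=O({h'}_n^{\gamma})+O(1/(n{h'}_n^2))$ by ``invoking the mutual-information convergence used in the proof of Theorem~\ref{Thm:L1},'' but the proof of Theorem~\ref{Thm:L1} only uses the almost-sure consistency \eqref{vdM} of the plug-in mutual-information estimate, which carries no rate. Deriving the stated polynomial bound is a substantial and separate argument (the paper's Proposition~\ref{pTrate}): one must decompose $I-I_n$ into an approximation term $J_{n,1}$, handled via a $\chi^2$-divergence bound under the Lipschitz condition giving $O({h'}_n)$ and then Markov's inequality to produce the arbitrary power $\gamma$; a centered sum $J_{n,4}$, handled via Chebyshev and a variance calculation requiring finiteness of $\int f\,(\log(f/(g_X g_Y)))^2$ (which itself needs the auxiliary lemma exploiting the Lipschitz lower bound $g_X\ge f^2/L$); and two more KL-type terms $I_n(\mu_n,\mu)$, $I_n(\mu_{n,1},\mu_1)$, $I_n(\mu_{n,2},\mu_2)$ handled by large-deviation inequalities for partitioned KL-divergence. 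None of this is in your sketch, and it is not a consequence of anything in Theorem~\ref{Thm:L1}'s proof, so as written this half of the argument is missing.
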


\begin{remark}
The rate of convergence in the theorem is dimension-free, which means that it does not depend on $d$.
Note that the constants hidden in the $O$-notation  of the last two terms in \eqref{rate} depend on the density $f$ beyond the Lipschitz constant (i.e., on $\delta>0$ of \eqref{eq:delta} --- 
%where $\delta>0$ is 
the minimum gap in the distinct mutual information values of different $X_i,X_j$). 
Thus, the bound is not in the minimax sense. 
\end{remark}

For the choice
\begin{align*}
h_n=c_1\cdot n^{-1/4}
\end{align*}
and
\begin{align*}
h'_n=c_2\cdot n^{-1/4} \quad \mbox{and} \quad \gamma=2,
\end{align*}
$nh'^2_n\to \infty$ and so (\ref{rate}) has the form
\begin{align*}
\EXP\left\{ \int |f_n(\bx)-f_{T_n}(\bx)|d\bx\right\}
=O\left(n^{-1/4}\right)+O\left(n^{-1/2}\right)
=O\left(n^{-1/4}\right)
\end{align*}
such that the density estimation error $O\left(n^{-1/4}\right)$ dominates the identification error $O\left(n^{-1/2}\right)$.
The upper bound $O\left(n^{-1/4}\right)$ on the rate of convergence cannot be improved.
For $d=2$, where there is no tree identification problem, this rate is a minimax lower bound for Lipschitz class and the ordinary histogram achieves this rate, see Lemma \ref{histrate} below. 
A simple embedding argument entails that this also holds for $d>2$ when $\KL(f,f_{T^*})=0$.

\begin{remark}
Recall that \citet{LiXuGuLaWa11} give bounds in a similar setup in terms of the KL-divergence.
In the case of Lipschitz density that satisfies the strong density condition, they show that
the excess KL-risk is of order $O\left(\ln n /n^{1/4}\right)$.
In Theorem \ref{Thm:rate} above we consider the $L^1$ loss. The strong density condition is avoided due to two technical ingredients: (a) the analysis of the approximation error $J_{n,1}$ in the proof of Proposition \ref{pTrate} and (b) the inclusion of the marginal density term in the loss of the conditional density estimation (i.e., \eqref{eq:A}).
\end{remark}

\section{Proof of Theorem  \ref{Thm:L1}}

Because of
\begin{align*}
\int |f_{T_n}(\bx)-f_n(\bx)|d\bx
&\le
2\IND_{T_{n}\notin \T_{\KL}}+ \sum_{ T^*\in \T_{\KL}}\IND_{T_n=T^*} \int |f_{T^*}(\bx)-f_n(\bx)|d\bx,
\end{align*}
the proof of Theorem  \ref{Thm:L1} is decomposed into two propositions.

\begin{prop}
\label{pTcons} Assume that all $I(X_{i},X_{j})$ are finite for $i\neq j$. If $nh'^2_n\to\infty$ and $h'_n\to 0$, then (\ref{Tas}) holds.
\end{prop}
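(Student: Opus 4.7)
The plan is to reduce the claim to uniform convergence of the empirical edge weights over the finite family $\T$ of spanning trees, combined with a strictly positive gap, in the population weights, separating $\T_{\KL}$ from its complement.

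First, I would set $W(T)=\sum_{(i,j)\in T}I(X_i,X_j)$ and $W_n(T)=\sum_{(i,j)\in T}I_n(X_i,X_j)$, so that by (\ref{eq:diff}), $T\in\T_{\KL}$ iff $W(T)=W^*$, where $W^*:=\max_{T\in\T}W(T)$. Define
\begin{align*}
\delta \;:=\; W^* - \max\{W(T):T\in\T\setminus\T_{\KL}\},
\end{align*}
with the convention $\delta=+\infty$ when $\T_{\KL}=\T$. Since $\T$ is finite and every $I(X_i,X_j)$ is finite by hypothesis, this maximum is attained and $\delta>0$ strictly by construction.

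Next, I would invoke the Barron-Györfi-van der Meulen consistency (\ref{vdM}) for each of the $\binom{d}{2}$ pairs $(X_i,X_j)$. The assumptions $h'_n\to 0$, $nh'^2_n\to\infty$ together with $I(X_i,X_j)<\infty$ give $I_n(X_i,X_j)\to I(X_i,X_j)$ a.s. for every pair, and by finiteness of the pair index set these convergences hold simultaneously on a single event of probability one. On that event, for all sufficiently large $n$, $|I_n(X_i,X_j)-I(X_i,X_j)|<\delta/(2(d-1))$ uniformly over pairs, so
\begin{align*}
\sup_{T\in\T}|W_n(T)-W(T)|<\delta/2.
\end{align*}
Hence for every $T\in\T\setminus\T_{\KL}$, $W_n(T)<W(T)+\delta/2\le W^*-\delta/2$, while any $T^*\in\T_{\KL}$ satisfies $W_n(T^*)>W^*-\delta/2$. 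Since $T_n$ is a maximizer of $W_n$ over $\T$ by (\ref{Tn}), it must eventually lie in $\T_{\KL}$, which is exactly (\ref{Tas}).

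There is no genuine obstacle here. The only two points requiring care are: (i) the possible non-uniqueness of $\T_{\KL}$ and of the empirical $\argmax$ in (\ref{Tn}), which is handled transparently by characterizing $\T_{\KL}$ via the equation $W(T)=W^*$ so that the above separation argument rules out \emph{every} non-optimal maximizer; and (ii) the need to control all $\binom{d}{2}$ edge-weight estimates simultaneously, which is immediate from a countable intersection of almost-sure events. The substantive input is the consistency (\ref{vdM}), which makes no regularity demand on the bivariate marginals beyond finiteness of the mutual informations.
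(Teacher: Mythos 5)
Your proof is correct and follows essentially the same route as the paper: invoke the almost-sure consistency \eqref{vdM} of the histogram-based mutual information estimates, intersect the finitely many good events, and use a strictly positive gap to conclude that the empirical maximizer lies in $\T_{\KL}$ for all large $n$. The only cosmetic difference is your choice of gap: you work with $W^* - \max\{W(T):T\notin\T_{\KL}\}$ at the level of total tree weights, whereas the paper works with $\delta$ as defined in \eqref{eq:delta}, the minimum absolute difference between \emph{distinct} pairwise mutual information values, and phrases the argument in terms of the ordering of $\{I_n(X_i,X_j)\}$ versus $\{I(X_i,X_j)\}$ (the quantity that drives Kruskal's algorithm). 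Both gaps are strictly positive by finiteness of $\T$ and of the mutual informations, and both lead to the same conclusion via a union bound; the paper's pairwise $\delta$ is reused in Proposition~\ref{pTrate}, which is why it is the one singled out there, but your tree-level gap is equally valid for the present proposition.
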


\begin{proof}
The event $\{T_{n}\notin \T_{\KL}\}$ means that the orderings of $\{I_n(X_{i},X_{j}), i\ne j\}$ and of $\{I(X_{i},X_{j}), i\ne j\}$ are distinct.
Put
\begin{align}
\label{eq:delta}
\delta
&=\min_{(i,j)\ne (u,v),|I(X_{i},X_{j})-I(X_{u},X_{v})|>0}|I(X_{i},X_{j})-I(X_{u},X_{v})|.
\end{align}
Then,
\begin{align*}
\{T_{n}\notin \T_{\KL}\}
&\subset 
\cup_{(i,j)\ne (u,v),I(X_{i},X_{j})-I(X_{u},X_{v})> 0 }\{ I_n(X_{i},X_{j})- I_n(X_{u},X_{v})< 0\}\\
&\subset
\cup_{(i,j)}\{| I_n(X_{i},X_{j})-I(X_{i},X_{j})|\ge \delta/2\}.
\end{align*}
Under the conditions of the proposition, (\ref{vdM}) implies that
\begin{align}
\label{as}
\lim_{n\to \infty} I_n( X_{i},X_{j})=I( X_{i},X_{j})
\end{align}
a.s., for all $i\ne j$,
from which the proposition follows.
\end{proof}

\begin{prop}
\label{pfcons}
If  $nh_n^2/\log n\to\infty$ and $h_n\to 0$, then
\begin{align}
\label{cons}
\lim_{n\to \infty} \IND_{T_{n}= T^* }\int |f_n(\bx)-f_{T^*}(\bx)|d\bx=0
\end{align}
a.s.
\end{prop}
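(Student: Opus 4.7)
The plan is to decompose $\int|f_n-f_{T^*}|d\bx$ into a telescoping sum over the $d$ histogram-based factors in the tree factorization, bound each piece by the $L_1$ error of a univariate or bivariate histogram, and then invoke the density-free a.s.\ $L_1$-consistency of such histograms. Throughout, I work on the event $\{T_n=T^*\}$, so that $f_n$ really is the tree density estimate induced by $T^*$, using the renumbering described before \eqref{histo} which guarantees $j(i)>i$ and that $i$ is a leaf of the subtree on $\{i,\dots,d\}$.

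First, define a hybrid sequence $g_0,g_1,\dots,g_d$ by replacing, one factor at a time, the true conditionals $f_{i\mid j(i)}$ ($i<d$) and the root marginal $f_d$ by their histogram-based estimates $f_{n,i\mid j(i)}$ and $f_{n,d}$, processed in the order $i=1,2,\dots,d-1,d$. Then $g_0=f_{T^*}$, $g_d=f_n$, and the triangle inequality gives $\int|f_n-f_{T^*}|d\bx\le\sum_{k=1}^d\int|g_k-g_{k-1}|d\bx$. The $k$th difference isolates a single ``error factor'' multiplied by the product of the remaining (estimated or true) pieces.

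Next, I bound each term by carefully performing the Lebesgue integration. For $k<d$, I iterate Fubini integrating $x_1,x_2,\dots,x_{k-1}$ in that order; by the leaf property of the renumbering, at the $i$th step $x_i$ occurs only in $f_{n,i\mid j(i)}(x_i\mid x_{j(i)})$, and direct calculation shows $\int f_{n,i\mid j(i)}(x_i\mid x_{j(i)})dx_i\in\{0,1\}$. This upper-bounds the contribution of the already-replaced factors by $1$. For the remaining factors $\prod_{k<i<d}f_{i\mid j(i)}\cdot f_d$, this product is the joint density of the tree model on the subtree $\{k+1,\dots,d\}$, whose single-vertex marginal at $j(k)$ equals $f_{j(k)}$; integrating out every variable except $x_{j(k)}$ therefore yields exactly $f_{j(k)}(x_{j(k)})$. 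The outcome is
\begin{align*}
\int|g_k-g_{k-1}|d\bx\le\int\!\!\int|f_{n,k\mid j(k)}(x_k\mid x_{j(k)})f_{j(k)}(x_{j(k)})-f_{k,j(k)}(x_k,x_{j(k)})|dx_kdx_{j(k)}.
\end{align*}
Using the key identity $f_{n,k\mid j(k)}(x_k\mid x_{j(k)})\,f_{n,j(k)}(x_{j(k)})=\hat f_{n,k,j(k)}(x_k,x_{j(k)})$ (the ordinary bivariate histogram, including the $0/0=0$ convention), another triangle inequality splits the right-hand side into $\int\!\!\int|\hat f_{n,k,j(k)}-f_{k,j(k)}|$ plus $\int|f_{n,j(k)}-f_{j(k)}|$ (after using $\int f_{n,k\mid j(k)}(x_k\mid\cdot)dx_k\le 1$). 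For the remaining term $k=d$, the same leaf-integration argument bounds $\int|g_d-g_{d-1}|d\bx$ by $\int|f_{n,d}-f_d|$.

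To finish, I invoke density-free a.s.\ $L_1$-consistency of the univariate and bivariate histograms: under $h_n\to 0$ and $nh_n^2/\log n\to\infty$, both $\int|f_{n,i}-f_i|$ and $\int\!\!\int|\hat f_{n,i,j}-f_{i,j}|$ tend to $0$ almost surely for every fixed pair $(i,j)$. The expected $L_1$-errors tend to $0$ by Abou-Jaoud\'e/Devroye--Gy\"orfi, and concentration via McDiarmid's inequality (each $L_1$-error has bounded differences $O(1/n)$) combined with the $\log n$ factor in the bandwidth condition makes the deviation probabilities summable, so Borel--Cantelli yields a.s.\ convergence. Since $d$ is fixed and only finitely many pairs $(k,j(k))$ arise, the total bound is an a.s.\ null sum, proving \eqref{cons}. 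The main obstacle is the bookkeeping in the telescoping integration: one must exploit the leaf renumbering to justify that each factor can be integrated ``at the right time'' despite the $0/0$ convention, and the identity $f_{n,k\mid j(k)}f_{n,j(k)}=\hat f_{n,k,j(k)}$ is what cleanly reduces a ratio-of-histograms estimate to a standard bivariate histogram error.
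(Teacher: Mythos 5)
Your proof is correct, but it takes a genuinely different route from the paper's. The paper's proof of this proposition proceeds via \emph{pointwise} strong consistency: it proves a Lemma on a.s.\ pointwise convergence of the histogram (using Bernstein's inequality plus Borel--Cantelli for the variance term, which is where the $\log n$ factor in the bandwidth condition is actually consumed, and the generalized Lebesgue density theorem for the bias), deduces $f_n(\bx)\to f_{T^*}(\bx)$ a.s.\ for $\lambda$-a.e.\ $\bx$ on the set where $f_{T^*}$ is finite and positive, verifies that both $f_{T^*}$ and $f_n$ integrate to one, and then upgrades pointwise to $L_1$ via the Scheff\'e identity $\int|f_n-f_{T^*}|=2\int(f_{T^*}-f_n)_+$ and dominated convergence. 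You instead go directly at the $L_1$ error via a telescoping (hybrid-argument) decomposition, reduce each summand to bivariate and univariate histogram $L_1$ errors using the leaf-first integration plus the clean identity $f_{n,k\mid j(k)}\,f_{n,j(k)}=\hat f_{n,k,j(k)}$, and then invoke density-free a.s.\ $L_1$-consistency of ordinary histograms. Interestingly, your decomposition essentially mirrors the paper's own proof of Proposition~\ref{pfrate} (the rate result), where the same telescoping sum appears, but there the paper handles the ratio-of-histograms via Lemma~\ref{lem:L1_bound} rather than your product identity; both devices work and cost roughly the same. Your approach has the advantage of uniformity (the same skeleton covers consistency and rate) and in fact only needs $nh_n^2\to\infty$ rather than $nh_n^2/\log n\to\infty$, since Abou-Jaoud\'e's theorem already yields a.s.\ $L_1$ consistency under the former and McDiarmid's inequality gives a summable deviation bound $2e^{-nt^2/2}$ for any fixed $t>0$ independently of the bandwidth. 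One small slip: you attribute summability of the deviation probabilities to ``the $\log n$ factor in the bandwidth condition,'' but that factor plays no role in the McDiarmid bound; the $n$ in the exponent alone suffices.
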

\begin{proof}
In the proof of this proposition we apply the strong pointwise consistency of the ordinary histogram.
Choose a sequence of partitions $\P_n$ of $\R^d$ such that the cells of $\P_n$ are shifted versions of $[0,h_n]^d$ with bin width $h_n$, $n=1,2,\dots $.
Assume i.i.d. data
\begin{align*}
\D_n=(\bX_1,\dots ,\bX_n)
\end{align*}
and let $\mu_n$ denote the empirical distribution for $\D_n$.
If $A_n(\bx)$ is the cell of $\P_n$ into which $\bx$ falls, then the histogram estimate is defined by
\begin{align*}
f_{n,h_n}(\bx)=\frac{\mu_n(A_n(\bx))}{h_n^d}.
\end{align*}

\begin{lemma}
\label{hist}
If $h_n\to 0$ and $nh_n^d/\log n\to\infty$, then
\begin{align}
\label{conss}
\lim_{n\to \infty} f_{n,h_n}(\bx)=f(\bx)
\end{align}
a.s. for $\lambda$-almost all $\bx$, where $\lambda$ is the Lebesgue measure.
\end{lemma}
\begin{proof}
Set
\begin{align*}
\bar f_{h_n}(\bx)
&=
\EXP\left\{\frac{\mu_n(A_n(\bx))}{h_n^d} \right\}
=\frac{\mu(A_n(\bx))}{h_n^d}.
\end{align*}
The consistency of the bias term $\bar f_{h_n}(\bx)- f(\bx) $ follows from the generalized Lebesgue density theorem 
%formulated by Theorem 7.16 in Wheeden and Zygmund \cite{WhZy77}:
\citep[Theorem 7.16]{WhZy77}:
Let $B_n(\bx)$ denote the smallest cube centered at $\bx$ and containing $A_n(\bx)$. If there is a constant $c>0$ such that
\begin{align}
\label{c}
\lambda(B_n(\bx))\le c \lambda(A_n(\bx)),
\end{align}
then $h_n\to 0$ implies that
\begin{align*}
\bar f_{h_n}(\bx)
&=
\frac{\mu(A_n(\bx))}{\lambda(A_n(\bx))}
\to f(\bx)
\end{align*}
for $\lambda$-almost all $\bx$.
Obviously, (\ref{c}) is satisfied with $c=2^d$.
For $\varepsilon>0$, Bernstein's inequality implies
\begin{align*}
\PROB\{|f_{n,h_n}(\bx)-\bar f_{h_n}(\bx)|>\varepsilon\}
&=
\PROB\{|\mu_n(A_n(\bx))-\mu(A_n(\bx))|>\varepsilon h_n^d\}\\
&\le
2e^{-\frac{n\varepsilon^2 h_n^{2d}}{2\mu(A_n(\bx))+2\varepsilon h_n^d/3 }}\\
&=
2e^{-\frac{n\varepsilon^2 h_n^{d}}{2\bar f_{h_n}(\bx)+2\varepsilon /3 }}.
\end{align*}
For $nh_n^d/\log n\to\infty$, this yields
\begin{align*}
\sum_{n=1}^{\infty}\PROB\{|f_{n,h_n}(\bx)-\bar f_{h_n}(\bx)|>\varepsilon\}
&<\infty
\end{align*}
if the sequence $\bar f_{h_n}(\bx)$ is bounded,
and by referring to the Borel-Cantelli lemma the proof of the variance term is complete, i.e.,
\begin{align*}
|f_{n,h_n}(\bx)-\bar f_{h_n}(\bx)|
&\to 0
\end{align*}
a.s.
\end{proof}
%\begin{proof}[Proof of Theorem~\ref{Thm:L1}]
We 
now complete
%finish 
the proof of Proposition \ref{pfcons}. 
First,
we claim
%show 
that $f_{T^*}$ is a density, i.e.,
\begin{align}
\label{eq:f_dens}
\int f_{T^*}(\bx)d\bx
&=1.
\end{align}
Indeed, the representation (\ref{root}) implies that
\begin{align*}
\int f_{T^*}(\bx)d\bx
&=
\int\dots \int f_{T^*}(x_1,\dots ,x_d)dx_1\dots dx_d\\
&=
\int\dots \int \prod_{i=1}^{d-1}f_{i\mid j(i)}(x_{i}\mid x_{j(i)})f_d(x_d)dx_1\dots dx_d.
\end{align*}
For the vertex set $\{ 1,\dots ,d\}$, $1$ is a leaf, therefore $j(1)>1$. Thus,
\begin{align*}
\int f_{T^*}(\bx)d\bx
&=
\int\dots \int \left( \int f_{1\mid j(1)}(x_{1}\mid x_{j(1)})dx_1  \right)\prod_{i=2}^{d-1}f_{i\mid j(i)}(x_{i}\mid x_{j(i)})f_d(x_d)dx_2\dots dx_d\\
&=
\int\dots \int \prod_{i=2}^{d-1}f_{i\mid j(i)}(x_{i}\mid x_{j(i)})f_d(x_d)dx_2\dots dx_d.
\end{align*}
\eqref{eq:f_dens} follows
%The proof is completed
by induction.
Similarly,
% to the previous derivation,
 one can check that $f_n$ is also a density, i.e.,
\begin{align*}
\int f_n(\bx)d\bx
&=1.
\end{align*}
For $x_i\in A\in \P_n$, $x_{j(i)}\in B\in \P_n$ and $i=1,\dots ,d-1$, put
\begin{align*}
g_n(x_{i}, x_{j(i)})
&=\frac{\mu_{n,i,j(i)}(A\times B)}{h_n^2}
\end{align*}
and
\begin{align*}
f_{n,j(i)}(x_{j(i)})
&=\frac{\mu_{n,d}(B)}{h_n}.
\end{align*}
%\textcolor{red}{
Put
\[
D=\{\bx: 0<f_{T^*}(\bx)<\infty\}.
\]
If $\bx\in D$, then $0<f_{i\mid j(i)}(x_{i}\mid x_{j(i)})<\infty$ and so $0<f_{j(i)}(x_{j(i)})$.
Then, Lemma \ref{hist} implies
\begin{align*}
f_{n,i\mid j(i)}(x_{i}\mid x_{j(i)})
&=
\frac{g_n(x_{i}, x_{j(i)})}{f_{n,j(i)}(x_{j(i)})}
\to
f_{i\mid j(i)}(x_{i}\mid x_{j(i)})
\end{align*}
a.s. for $\lambda$-almost all $\bx\in D$.
Thus,
\begin{align}
\label{consss}
f_n(\bx)
&\to
f_{T^*}(\bx)
\end{align}
a.s. for $\lambda$-almost all $\bx\in D$.
The proof is completed by referring to the fact that pointwise consistency implies $L_1$ consistency:
\begin{align*}
\int |f_n(\bx)-f_{T^*}(\bx)|d\bx
&=
2\int (f_{T^*}(\bx)-f_n(\bx))_+d\bx
+\int f_n(\bx)d\bx
-\int f_{T^*}(\bx)d\bx\\
&=
2\int (f_{T^*}(\bx)-f_n(\bx))_+d\bx\\
&=
2\int_D (f_{T^*}(\bx)-f_n(\bx))_+d\bx\\
&\to 0
\end{align*}
%}
a.s., where we used (\ref{consss}) and the dominated convergence theorem.
\end{proof}

\section{Proof of Theorem \ref{Thm:rate}}

Again, the proof of Theorem  \ref{Thm:rate} is decomposed into two propositions.

\begin{prop}
\label{pTrate} 
%Assume that 
%\textcolor{blue}{\begin{align*}
%\int \int f_{i,j}(x_i,x_j)\left(\log \frac{f_{i,j}(x_i,x_j)}{f_{i}(x_i)f_{j}(x_j)} \right)^2 dx_idx_j<\infty .
%\end{align*}} for $i\neq j$.
If  $\bX$ is bounded, the one-dimensional marginals  $\{f_{i}\}$ and  the two-dimensional marginals  $\{f_{i,j}\}$
 satisfy  the Lipschitz condition, and $n{h'}_n^2\to \infty$, then
\begin{align*}
\PROB\{T_{n}\notin \T_{\KL}\}
&\le
O( {h'}^{\gamma}_n)+O\left(\frac{1}{n{h'}_n^2}\right)
\end{align*}
with all $\gamma>0$.
\end{prop}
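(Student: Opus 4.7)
The plan is to adapt the proof of Proposition \ref{pTcons}, replacing the asymptotic statement (\ref{as}) with a quantitative probabilistic rate. Reusing the containment
\begin{align*}
\{T_{n}\notin \T_{\KL}\}\subset \bigcup_{(i,j)}\{|I_n(X_{i},X_{j})-I(X_{i},X_{j})|\ge \delta/2\}
\end{align*}
established there, and a union bound over the $\binom{d}{2}$ pairs (with $d$ treated as a fixed constant), the proof reduces to showing, for every pair $(X,Y)=(X_i,X_j)$ with bounded Lipschitz bivariate density $f_{X,Y}$,
\begin{align*}
\PROB\{|I_n(X,Y)-I(X,Y)|\ge \delta/2\}=O(h'^{\gamma}_n)+O\bigl(1/(nh'^2_n)\bigr).
\end{align*}

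I would then split the error into a stochastic fluctuation and a deterministic bias by introducing $\bar I_n$, the mutual information of the discretized distribution whose cell masses equal the true cell probabilities on $\P_n\times \Q_n$. Writing $|I_n-I|\le |I_n-\bar I_n|+|\bar I_n-I|$, the two terms are handled separately. For the stochastic term, the boundedness of $\bX$ together with the Lipschitz (hence bounded) bivariate density ensures that the active grid contains only $O(1/h'^2_n)$ cells, each of mass $O(h'^2_n)$. Expressing $I_n-\bar I_n$ as a plug-in functional of the empirical cell frequencies and taming the logarithmic singularities by truncating the sum in (\ref{est}) at cell masses of order $h'^2_n$ (the tail contribution is controlled deterministically from boundedness of $f$), a standard second-moment computation yields $\Var(I_n)=O(1/(nh'^2_n))$ and Chebyshev's inequality then gives $\PROB\{|I_n-\bar I_n|\ge \delta/4\}=O(1/(nh'^2_n))$.

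The main obstacle is the deterministic bias $|\bar I_n - I(X,Y)|$: the integrand $f_{X,Y}\log(f_{X,Y}/(f_X f_Y))$ is not globally Lipschitz, since $\log$ blows up at zeros of any of the three densities, and we are committed to avoiding the strong-density assumption of \citet{LiXuGuLaWa11}. I would address this with a truncation at level $\varepsilon>0$: on the region $\{\min(f_{X,Y},f_X,f_Y)\ge \varepsilon\}$ the integrand is locally Lipschitz with constant $O(|\log \varepsilon|)$, so the midpoint-cancellation property of cell averaging against Lipschitz integrands contributes $O(h'^2_n|\log \varepsilon|)$; on the complementary region, boundedness of the densities (from Lipschitz plus bounded support) together with the elementary inequality $|x\log x|\le C\varepsilon|\log \varepsilon|$ for $x\le \varepsilon$ yields contribution $O(\varepsilon|\log \varepsilon|)$. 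Balancing by choosing $\varepsilon=h'^{\alpha}_n$ for $\alpha$ sufficiently large (depending on $\gamma$) and absorbing logarithmic factors into the implicit constant produces $|\bar I_n - I(X,Y)|=O(h'^{\gamma}_n)$ for any prescribed $\gamma>0$. Combining this with the Chebyshev bound from the previous paragraph gives the stated rate.
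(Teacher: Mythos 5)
Your proposal starts from the same place as the paper — the containment $\{T_n\notin\T_{\KL}\}\subset\bigcup_{(i,j)}\{|I_n-I|\ge\delta/2\}$, the union bound, and the split into a deterministic bias $|\bar I_n-I|$ and a stochastic fluctuation $|I_n-\bar I_n|$ (in the paper's notation, $J_{n,1}$ versus $J_{n,2}+J_{n,3}$). But the two technical steps you sketch are where the argument actually has to live, and neither is carried out correctly.

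\textbf{Bias term.} You try to bound $|\bar I_n-I|$ itself by $O(h_n'^\gamma)$ for arbitrary $\gamma$ by truncating near the zeros of the densities, but the numbers do not work out: for a merely Lipschitz density the cell-averaging error $|f-\bar f|$ is $O(h_n')$, not $O(h_n'^2)$ (midpoint cancellation requires $C^2$-type smoothness, not Lipschitz), and the balance $\varepsilon=h_n'^\alpha$ never pushes the first contribution below $O(h_n'\cdot|\log h_n'|)$; so you cannot make the bias $O(h_n'^\gamma)$ for large $\gamma$. The paper sidesteps this entirely: the crucial observation is that the bias is a \emph{deterministic} quantity, nonnegative by the data-processing inequality ($J_{n,1}\ge 0$), bounded above by $D_{\KL}(f,\bar f)\le D_{\chi^2}(f,\bar f)=O(h_n')$ via the Lipschitz condition, and then the indicator is handled by the Markov-type bound $\IND_{J_{n,1}\ge\varepsilon}\le (J_{n,1}/\varepsilon)^\gamma=O(h_n'^\gamma)$. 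The arbitrary $\gamma$ comes from this indicator trick applied to a deterministic quantity, not from the bias itself being $O(h_n'^\gamma)$. Your write-up conflates these.

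\textbf{Fluctuation term.} Claiming $\Var(I_n)=O(1/(nh_n'^2))$ by ``a standard second-moment computation'' after a truncation is not enough, because $I_n$ is a \emph{nonlinear} functional of the empirical cell frequencies (the $\log$ acts on the random $\mu_n(A\times B)$, $\mu_{n,1}(A)$, $\mu_{n,2}(B)$), and the variance of the log of an empirical frequency blows up when cell counts are small; a truncation modifies the estimator and reintroduces an uncontrolled bias. The paper instead peels off the nonlinear pieces as KL divergences between empirical and true partitioned distributions, $I_n(\mu_n,\mu)$, $I_n(\mu_{n,1},\mu_1)$, $I_n(\mu_{n,2},\mu_2)$, and controls them by the exponential large-deviation inequalities of Tusn\'ady/Kallenberg/Quine--Robinson/Barron (which require $nh_n'^2\to\infty$ and boundedness of $X,Y$, matching the hypotheses); only the remaining \emph{linear} functional $J_{n,4}$ is handled by a Cauchy--Schwarz variance bound and Chebyshev, giving the $O(1/(nh_n'^2))$ term. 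Without this decomposition, your Chebyshev step is a genuine gap.

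In short, the outer structure is right, but both inner estimates are substituted with heuristic claims that do not hold at the stated level of generality, and the two ideas that actually make the proof go through — the nonnegativity-plus-$\chi^2$-plus-indicator argument for $J_{n,1}$, and the exponential KL large-deviation bounds for the nonlinear parts of $J_{n,2}+J_{n,3}$ — are both missing.
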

\begin{proof}
With the notation of Proposition \ref{pTcons}, 
\begin{align*}
\PROB\{T_{n}\notin \T_{\KL}\}
&\le
\max_{(i,j)\ne (u,v)}\IND_{I(X_{i},X_{j})-I(X_{u},X_{v})>0 }\PROB\{ I_n(X_{i},X_{j})- I_n(X_{u},X_{v})< 0\}\\
&\le
\sum_{(i,j)}\PROB\{| I_n(X_{i},X_{j})-I(X_{i},X_{j})|\ge \delta/2\}.
\end{align*}
Therefore, we have to bound the rate of convergence of
\begin{align*}
\PROB\{|I_{n}(X,Y)-I(X,Y)|\ge 5\varepsilon \}
\end{align*}
where $X=X_i$ and $Y=X_j$ with $i\neq j \in \{1,\dots,d\}$ and $\varepsilon=\delta/10$.
We show, that under the condition $n{h'}_n^2\to \infty$,
\begin{align}
\label{hhn}
\PROB\{|I_{n}(X,Y)-I(X,Y)|\ge 5\varepsilon \}
&=O\left({h'}^{\gamma}_n\right)+3e^{-n(\varepsilon +o(1))}+O\left(\frac{1}{n{h'}_n^2}\right).
\end{align}
Consider the decomposition
\begin{align*}
I(X,Y)-I_{n}(X,Y)
&=I(\mu,\mu_{1}\times \mu_{2})-I_{n}(\mu_n,\mu_{n,1}\times \mu_{n,2})
=J_{n,1}+J_{n,2}+J_{n,3},
\end{align*}
where
\begin{align*}
J_{n,1}&=I(\mu,\mu_{1}\times \mu_{2})-I_{n}(\mu,\mu_{1}\times \mu_{2})\\
J_{n,2}&=I_{n}(\mu,\mu_{1}\times \mu_{2})-I_{n}(\mu_n,\mu_{1}\times \mu_{2})\\
J_{n,3}&=I_{n}(\mu_n,\mu_{1}\times \mu_{2})-I_{n}(\mu_n,\mu_{n,1}\times \mu_{n,2})
=I_{n}(\mu_{n,1},\mu_{1})+I_{n}(\mu_{n,2},\mu_{2}).
\end{align*}
To bound the approximation term $J_{n,1}$, note that
\begin{align*}
J_{n,1}\ge 0,
\end{align*}
because the mutual information is a KL-divergence, and $ I_{n}(\mu,\mu_{1}\times \mu_{2})$ is a KL-divergence restricted to a product of partitions. Thus, instead of looking at $|J_{n,1}|$, it is enough to upper bound $J_{n,1}$.
Let $\bar f_{X,Y}(x,y) =\frac{1}{h_n^2}\int\int_{A_n(x,y)} f_{X,Y}(x',y') dx' dy' =  \frac{\mu(A_n(x,y))}{h_n^2}$, where 
$A_n(x,y)$ is the cell of the product partition into which $(x,y)$ falls, and similarly $\bar g_X$ and $\bar g_Y$ for the marginals $g_X$ and $g_Y$.
We have that
\begin{align*}
    J_{n,1} &= \int\int f_{X,Y}(x,y)\log{\frac{f_{X,Y}(x,y)}{g_X(x) g_Y(y)}} - \int\int\bar f_{X,Y}(x,y)\log{\frac{\bar f_{X,Y}(x,y)}{\bar g_X(x) \bar g_Y(y)}}
    \\ &
    = 
    \int\int f_{X,Y}(x,y)\log{\frac{f_{X,Y}(x,y)}{g_X(x) g_Y(y)}} dxdy
    - \int\int f_{X,Y}(x,y)\log{\frac{\bar f_{X,Y}(x,y)}{\bar g_X(x) \bar g_Y(y)}}dxdy
    \\&\quad
    + \int\int f_{X,Y}(x,y)\log{\frac{\bar f_{X,Y}(x,y)}{\bar g_X(x) \bar g_Y(y)}}dxdy
    - \int\int\bar f_{X,Y}(x,y)\log{\frac{\bar f_{X,Y}(x,y)}{\bar g_X(x) \bar g_Y(y)}}dxdy.
\end{align*}
The second line in the last equation is zero while the first one is
\begin{align*}
    D_{KL}(f_{X,Y},\bar f_{X,Y}) - D_{KL}(g_X,\bar g_X) - D_{KL}(g_Y,\bar g_Y) \leq D_{KL}(f_{X,Y},\bar f_{X,Y}).
\end{align*}
Since $J_{n,1}\geq 0$, it is left to show that $D_{KL}(f_{X,Y},\bar f_{X,Y})=O({h'_n})$.
To this end,
applying the Lipschitz condition one gets that
\begin{align*}
D_{KL}(f_{X,Y},\bar f_{ X,Y})
\le 
D_{\chi^2}(f_{X,Y},\bar f_{X,Y})
&=
\int \frac{(f_{X,Y}(x ,y)-\bar f_{X,Y}(x,y))^2}{\bar f_{X,Y}(x,y)}dx{dy}
\\ &
\le 
Lh_n'\int \frac{|f_{X,Y}(x,y)-\bar f_{X,Y}(x,y)|}{\bar f_{X,Y}(x,y)}dxdy.
\end{align*}
Therefore,
\begin{align*}
D_{KL}(f_{X,Y},\bar f_{X,Y})
&\le 
Lh_n'\int \frac{f_{X,Y}(x,y)+\bar f_{X,Y}(x,y)}{\bar f_{X,Y}(x,y)}dx dy
=
Lh_n'\int \frac{\bar f_{X,Y}(x,y)+\bar f_{X,Y}(x,y)}{\bar f_{X,Y}(x,y)}dx dy
  = 
2CLh_n',
\end{align*}
where $C$ is the Lebesgue measure of the support of $\bar f$.
Thus,
\begin{align*}
\IND_{J_{n,1}\ge \varepsilon }
&\le J_{n,1}^{\gamma}/\varepsilon^{\gamma}
\le 2^{\gamma}C^{\gamma}L^{\gamma}{h'}^{\gamma}_n /\varepsilon^{\gamma}.
\end{align*}
For KL-divergence restricted to finite partitions, 
%Tusn\'ady \cite{Tus77},  
%Kallenberg \cite{Kal85},  
%Quine and Robinson \cite{QuRo85}
%and Barron \cite{Bar89}
\citet{Tus77,Kal85,QuRo85,Bar89}
proved exponential, large deviation-type inequalities, 
\citep[Section 3.1]{GrGy10}.
%see Section 3.1 in Gretton and Gy\"orfi \cite{GrGy10}.
From \citet[Equation (13)]{GrGy10}, the boundedness of $X$ and $Y$ and $nh'_n\to \infty$ yield
\begin{align*}
\PROB\{I_{n}(\mu_{n,1},\mu_{1})>\varepsilon\}
&=e^{-n(\varepsilon +o(1))}
\end{align*}
and
\begin{align*}
\PROB\{I_{n}(\mu_{n,2},\mu_{2})>\varepsilon\}
&=e^{-n(\varepsilon +o(1))}.
\end{align*}
We have that
\begin{align*}
J_{n,2}
&=I_{n}(\mu,\mu_{1}\times \mu_{2})-I_{n}(\mu_n,\mu)+I_{n}(\mu_n,\mu)-I_{n}(\mu_n,\mu_{1}\times \mu_{2})\\
&=J_{n,4}-I_{n}(\mu_n,\mu),
\end{align*}
where
\begin{align*}
J_{n,4}
=\sum_{A\in \mathcal{P}_{n},B\in \mathcal{Q}_{n}}
(\mu(A\times B)-\mu_{n}(A\times B))\log \frac{\mu(A\times B)}{\mu _{1}(A)\mu _{2}(B)},
\end{align*}
Again, the boundedness of $X$ and $Y$ and $n{h'}_n^2\to \infty$ yield
\begin{align*}
\PROB\{I_{n}(\mu_{n},\mu)>\varepsilon\}
&=e^{-n(\varepsilon +o(1))}.
\end{align*}
The Cauchy-Schwarz inequality implies that
\begin{align*}
\Var(J_{n,4})
&\le
|\mathcal{P}_{n}|\cdot |\mathcal{Q}_{n}|\sum_{A\in \mathcal{P}_{n},B\in \mathcal{Q}_{n}}\Var\left(\mu_{n}(A\times B)\log \frac{\mu(A\times B)}{\mu _{1}(A)\mu _{2}(B)} \right)\\
&\le
\frac{|\mathcal{P}_{n}|\cdot |\mathcal{Q}_{n}|}{n}  \sum_{A\in \mathcal{P}_{n},B\in \mathcal{Q}_{n}}
\mu(A\times B)\left(\log \frac{\mu(A\times B)}{\mu _{1}(A)\mu _{2}(B)}   \right)^2.
\end{align*}
Therefore, Chebyshev's inequality implies
\begin{align*}
\PROB\{|J_{n,4}|\ge \varepsilon \}
&\le \Var(J_{n,4})/\varepsilon^2
=O\left(\frac{1}{nh_n'^2}\right),
\end{align*}
where we 
used the fact
%applied 
that under the conditions of the proposition
\begin{align*}
\sum_{A\in \mathcal{P}_{n},B\in \mathcal{Q}_{n}}\mu(A\times B)\left(\log \frac{\mu(A\times B)}{\mu _{1}(A)\mu _{2}(B)}   \right)^2
&\to
\int \int f_{X,Y}(x,y)\left(\log \frac{f_{X,Y}(x,y)}{g_{X}(x)g_{Y}(y)} \right)^2 dxdy<\infty .
\end{align*}
The last inequality is proved in the following lemma.
\end{proof}

%{\color{blue}
\begin{lemma} If  $X,Y$ are bounded 
%the one-dimensional marginals  $\{f_{i}\}$ 
and the density $f_{X,Y}$ satisfies the Lipschitz condition, then 
%$I(X,Y)<\infty$ and
\begin{align}
\label{eq:integral_bound}
I(X,Y)^2 \leq    \int \int f_{X,Y}(x,y)\left(\log \frac{f_{X,Y}(x,y)}{g_{X}(x)g_{Y}(y)} \right)^2 dxdy<\infty .
\end{align}
\end{lemma}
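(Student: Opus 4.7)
The plan is to handle the two inequalities separately: the left inequality is a one-line application of Jensen/Cauchy--Schwarz, while the right inequality (finiteness) is the real content, and it reduces to the observation that $t(\log t)^2$ is bounded on any bounded interval $[0,M]$ with the convention $0\cdot(\log 0)^2=0$.

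First, I would prove $I(X,Y)^2\le \int\!\int f_{X,Y}(x,y)\bigl(\log\tfrac{f_{X,Y}(x,y)}{g_X(x)g_Y(y)}\bigr)^2\,dx\,dy$ by applying Cauchy--Schwarz to the probability measure with density $f_{X,Y}$: writing $Z(x,y)=\log\tfrac{f_{X,Y}(x,y)}{g_X(x)g_Y(y)}$, we have $I(X,Y)=\int\!\int f_{X,Y} Z$ and $\bigl(\int\!\int f_{X,Y} Z\bigr)^2\le \int\!\int f_{X,Y}\cdot \int\!\int f_{X,Y} Z^2=\int\!\int f_{X,Y} Z^2$ since $f_{X,Y}$ integrates to $1$.

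Next, for finiteness, I would start from the elementary bound $(a-b-c)^2\le 3(a^2+b^2+c^2)$, which gives, on the support of $f_{X,Y}$,
\begin{align*}
\left(\log\frac{f_{X,Y}(x,y)}{g_X(x)g_Y(y)}\right)^{\!2}
\le
3(\log f_{X,Y}(x,y))^2+3(\log g_X(x))^2+3(\log g_Y(y))^2.
\end{align*}
Multiplying by $f_{X,Y}$ and integrating, the cross terms collapse via the marginals: $\int\!\int f_{X,Y}(x,y)(\log g_X(x))^2\,dx\,dy=\int g_X(x)(\log g_X(x))^2\,dx$, and similarly for $g_Y$. Hence it suffices to show that $\int h(\bx)(\log h(\bx))^2\,d\bx<\infty$ for each of $h\in\{f_{X,Y},g_X,g_Y\}$.

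The key observation is that the Lipschitz assumption on $f_{X,Y}$ together with the boundedness of $(X,Y)$ implies that $f_{X,Y}$ is bounded above by some constant $M<\infty$ and is supported on a set of finite Lebesgue measure $C$; the marginals $g_X,g_Y$ inherit both properties (boundedness from $g_X(x)\le M\cdot |\mathrm{supp}(Y)|$, and bounded support from the boundedness of $X,Y$). The function $\varphi(t)=t(\log t)^2$ is continuous on $(0,M]$ with $\lim_{t\to 0^+}\varphi(t)=0$, hence bounded on $[0,M]$ (setting $\varphi(0)=0$). Therefore each integral $\int h(\log h)^2$ is dominated by $\sup_{t\in[0,M']}\varphi(t)$ times the Lebesgue measure of the support, which is finite. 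The main (minor) obstacle is book-keeping the constants $M,M',C$ across the three terms, but none of the steps hide a real difficulty, and the proof concludes by combining these bounds with the factor $3$ from the inequality above.
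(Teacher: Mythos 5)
Your argument is correct, and it takes a genuinely different route from the paper's. For the left inequality both you and the paper use Jensen/Cauchy--Schwarz, so that part is the same. For finiteness, the paper's key technical step is a Lipschitz-derived pointwise lower bound on the marginals, $g_X(x)\ge f_{X,Y}(x,y)^2/L$ (and similarly for $g_Y$), obtained by integrating the tent function $f_{X,Y}(x,y)-L|y'-y|$ over a small interval. It then splits the domain according to whether $f_{X,Y}/(g_Xg_Y)$ is $<1$ or $\ge1$ and on the second region replaces $g_Xg_Y$ by $f_{X,Y}^4/L^2$, reducing everything to boundedness of $t\log^2 t$ on a bounded interval. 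Your approach instead applies $(a-b-c)^2\le 3(a^2+b^2+c^2)$, notices that the cross terms collapse under Fubini to the marginal integrals $\int g_X\log^2 g_X$ and $\int g_Y\log^2 g_Y$, and then invokes the same boundedness of $t\log^2 t$ for each of the three pieces separately. Your route is more elementary: it never needs the lower bound on the marginals, and in fact only uses the Lipschitz hypothesis through the weaker consequence that $f_{X,Y}$ is bounded (which, together with bounded support, makes $g_X,g_Y$ bounded with bounded support as well). The paper's argument, by contrast, produces an explicit quantitative bound in terms of $L$ that could in principle be tracked through the rest of the rate analysis, whereas yours gives finiteness more cheaply but with constants that are less transparent. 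Both proofs are valid; yours is arguably the cleaner route for the lemma as stated.
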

\begin{proof}
By Jensen's inequality
\begin{align*}
\int \int f_{X,Y}(x,y)\left(\log \frac{f_{X,Y}(x,y)}{g_{X}(x)g_{Y}(y)} \right)^2 dxdy
&\geq 
\left(\int \int f_{X,Y}(x,y)\log \frac{f_{X,Y}(x,y)}{g_{X}(x)g_{Y}(y)}  dxdy\right)^2
=
I(X,Y)^2.
\end{align*}
To show the second inequality in \eqref{eq:integral_bound},
let $L> 0$ be the Lipschitz constant of $f_{X,Y}$ and note that 
%and let $B> 0$ such that $f_{X,Y}$'s support is contained in $[0,B]^2$.
for any $(x,y)\in \R^2$,
\begin{align*}
    g_X(x) & = \int_{-\infty}^\infty f_{X,Y}(x,y') dy' 
    \geq \int_{y-\frac{f_{X,Y}(x,y)}{L}}^{y+\frac{f_{X,Y}(x,y)}{L}} f_{X,Y}(x,y') dy' 
    %\\ & = \int_{-\frac{f_{X,Y}(x,y)}{L}}^{\frac{f_{X,Y}(x,y)}{L}} f_{X,Y}(x,y+y') dy' 
    \geq \int_{-\frac{f_{X,Y}(x,y)}{L}}^{\frac{f_{X,Y}(x,y)}{L}} (f_{X,Y}(x,y) - L |y'|) dy' 
    = \frac{f_{X,Y}(x,y)^2}{L}.
\end{align*}
Similarly, $g_Y(y)\geq \frac{f_{X,Y}(x,y)^2}{L}$.
%Hence,
%\begin{align*}
%    I(X,Y)&=\int_{-\infty}^{\infty}\int_{-\infty}^{\infty} f_{X,Y}(x,y) \log \frac{f_{X,Y}(x,y)}{g_X(x) g_Y(y)}dx dy
%    \\&\leq
%    \int_{-\infty}^{\infty}\int_{-\infty}^{\infty} f_{X,Y}(x,y) \log \frac{L}{g_X(x)^{\frac{1}{2}}g_Y(y)^{\frac{1}{2}}}dx dy
%    \\&= \log L + \frac{1}{2} \int_{[0,B]} g_X(x) \log \frac{1}{g_X(x)} dx
%    + \frac{1}{2} \int_{[0,B]} g_Y(y) \log \frac{1}{g_Y(y)} dy
%        \\&\leq \log L + \frac{1}{2}\log B + \frac{1}{2}\log B = \log(LB) <\infty,
%\end{align*}
%\begin{align*}    
%I(X,Y)&=\int_{-\infty}^{\infty}\int_{-\infty}^{\infty} f_{X,Y}(x,y) \log \frac{f_{X,Y}(x,y)}{g_X(x) g_Y(y)}dx dy 
%\\&\leq \int_{-\infty}^{\infty}\int_{-\infty}^{\infty} f_{X,Y}(x,y) \log \frac{L^2}{f_{X,Y}(x,y)^3}dx dy
%\\& = 2\log L + 3 \int_{0}^{B}\int_{0}^{B}f_{X,Y}(x,y) \log \frac{1}{f_{X,Y}(x,y)}dx dy
%\\& \leq 2\log L + 3\log B^2  = 2\log (LB^3)<\infty,
%\end{align*}
%where in the last line we used Jensen's inequality. (As a sanity check, note that since $f_{X,Y}$ is Lipschitz, it vanishes on the boundary of $[0,B]^2$. So $1 = \int\int f_{X,Y}(x,y) dxdy \leq  LB^3/6$ and so $2\log (LB^3) \geq 0$.)
%
%As for \eqref{eq:integral_bound},
We write 
\begin{align*}
    \int \int f_{X,Y}(x,y)\left(\log \frac{f_{X,Y}(x,y)}{g_{X}(x)g_{Y}(y)} \right)^2 dxdy
    &= \int \int g_{X}(x)g_{Y}(y)\frac{f_{X,Y}(x,y)}{g_{X}(x)g_{Y}(y)}\left(\log \frac{f_{X,Y}(x,y)}{g_{X}(x)g_{Y}(y)} \right)^2 dxdy
%    \\ & = \int \int_{\frac{f_{X,Y}(x,y)}{g_{X}(x)g_{Y}(y)} \leq 1} g_{X}(x)g_{Y}(y)\frac{f_{X,Y}(x,y)}{g_{X}(x)g_{Y}(y)}\left(\log \frac{f_{X,Y}(x,y)}{g_{X}(x)g_{Y}(y)} \right)^2 dxdy
%    \\ &\quad +
\end{align*}
and split the integral's domain into those 
$x,y$ satisfying $\frac{f_{X,Y}(x,y)}{g_{X}(x)g_{Y}(y)} < 1$ and those for which $\frac{f_{X,Y}(x,y)}{g_{X}(x)g_{Y}(y)} \geq 1$.
Since $t\log^2 t$ is bounded for $0\leq t \leq 1$, the integral over the first domain is bounded. For the second domain, since $\log^2 t$ is monotonic increasing for $t\geq 1$,
we use the bounds $g_X(x)\geq \frac{f_{X,Y}(x,y)^2}{L}$ and $g_Y(y)\geq \frac{f_{X,Y}(x,y)^2}{L}$ above to get
\begin{align*}
    \int \int_{\frac{f_{X,Y}(x,y)}{g_{X}(x)g_{Y}(y)} \geq 1} f_{X,Y}(x,y)\left(\log \frac{f_{X,Y}(x,y)}{g_{X}(x)g_{Y}(y)} \right)^2 dxdy
    &
    \leq 
    \int \int_{\frac{f_{X,Y}(x,y)}{g_{X}(x)g_{Y}(y)} \geq 1} f_{X,Y}(x,y)\left(\log \frac{L^2}{f_{X,Y}(x,y)^3} \right)^2 dxdy
    \\&
    \leq 
    \int \int f_{X,Y}(x,y)\left(\log \frac{L^2}{f_{X,Y}(x,y)^3} \right)^2 dxdy
    \\ &= 
    9 L^{2/3}\int \int \frac{f_{X,Y}(x,y)}{L^{2/3}}\left(\log \frac{f_{X,Y}(x,y)}{L^{2/3}}\right)^2 dxdy
.
\end{align*}
Since $f_{X,Y}$ is Lipschitz in a bounded domain, it is bounded, and since $t\log^2 t$ is bounded when $t\geq 0$ is bounded, the last integral is bounded as well.
\end{proof}
%}

\begin{prop}
\label{pfrate}
If $\bX$ is bounded, the one-dimensional marginals  $\{f_{i}\}$ and  the two-dimensional marginals  $\{f_{i,j}\}$ satisfy the Lipschitz condition, then
\begin{align*}
\EXP\left\{ \IND_{T_{n}= T^* }\int |f_n(\bx)-f_{T^*}(\bx)|d\bx\right\}=O\left(1/\sqrt{nh_n^2}\right)+O(h_n).
\end{align*}
\end{prop}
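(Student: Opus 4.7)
The plan is to upper-bound $\int|f_n-f_{T^*}|\,d\bx$ by a telescoping sum in which the true conditional factors of $f_{T^*}$ are replaced one at a time by their estimated counterparts, and then to evaluate each term using the tree structure and standard histogram $L_1$ rates. For $k=0,\dots,d$ let $f^{(k)}$ be the hybrid density that uses the estimate $f_{n,i\mid j(i)}$ for $i\le\min(k,d-1)$ and the true $f_{i\mid j(i)}$ otherwise, with $f_d$ replaced by $f_{n,d}$ at $k=d$; then $f^{(0)}=f_{T^*}$, $f^{(d)}=f_n$, and
\begin{align*}
\int|f_n-f_{T^*}|\,d\bx \;\le\; \sum_{k=1}^{d}\int|f^{(k)}-f^{(k-1)}|\,d\bx.
\end{align*}

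For $1\le k<d$ the difference $f^{(k)}-f^{(k-1)}$ factors as $\bigl[f_{n,k\mid j(k)}-f_{k\mid j(k)}\bigr]\cdot\prod_{i<k}f_{n,i\mid j(i)}\cdot\prod_{k<i<d}f_{i\mid j(i)}\cdot f_d$. I now integrate out the $d-2$ variables other than $x_k,x_{j(k)}$; this is the delicate step and is where I expect the main work. The renumbering of $V$ (so that $\{i,\dots,d\}$ is a subtree and $j(i)>i$) is tailor-made for this: integrating $x_1,\dots,x_{k-1}$ in order, each $f_{n,i\mid j(i)}$ contributes $\int f_{n,i\mid j(i)}(x_i\mid\cdot)\,dx_i = 1$, since every other factor that could contain $x_i$ as a conditioning variable would require $j(\ell)=i$ with $\ell<i$, and that $x_\ell$ has already been integrated away. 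What remains, $\prod_{k<i<d}f_{i\mid j(i)}\cdot f_d$, is a tree density on $\{k+1,\dots,d\}$ built from the true marginals, whose univariate marginals are the true $f_m$; integrating out all $x_i$ other than $x_{j(k)}$ therefore produces exactly $f_{j(k)}(x_{j(k)})$. Consequently,
\begin{align*}
\int|f^{(k)}-f^{(k-1)}|\,d\bx \;=\; \int\bigl|f_{n,k\mid j(k)}(x_k\mid x_{j(k)})\,f_{j(k)}(x_{j(k)})-f_{k,j(k)}(x_k,x_{j(k)})\bigr|\,dx_k\,dx_{j(k)}.
\end{align*}

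Next I reduce to histogram rates using the algebraic identity (the trick highlighted in the earlier remark): with $g_{n,k,j(k)}:=f_{n,k\mid j(k)}\cdot f_{n,j(k)}$, the bivariate histogram estimator of $f_{k,j(k)}$,
\begin{align*}
f_{n,k\mid j(k)}\,f_{j(k)}-f_{k,j(k)} \;=\; f_{n,k\mid j(k)}\bigl[f_{j(k)}-f_{n,j(k)}\bigr]+\bigl[g_{n,k,j(k)}-f_{k,j(k)}\bigr].
\end{align*}
Since $\int f_{n,k\mid j(k)}(x_k\mid x_{j(k)})\,dx_k\le 1$, the triangle inequality bounds the above by $\|f_{n,j(k)}-f_{j(k)}\|_1+\|g_{n,k,j(k)}-f_{k,j(k)}\|_1$; the $k=d$ term is handled analogously and equals $\|f_{n,d}-f_d\|_1$. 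Thus the conditional-density error is naturally weighted by the true marginal $f_{j(k)}$, which is precisely what circumvents the strong-density assumption. Finally, taking expectations and applying the standard $L_1$ histogram rates for Lipschitz densities on a bounded support (Lemma~\ref{histrate})---$O(1/\sqrt{nh_n})+O(h_n)$ for univariate marginals and $O(1/\sqrt{nh_n^2})+O(h_n)$ for bivariate ones, the latter rate dominating---and summing over the finitely many $k$ gives the claimed $O(1/\sqrt{nh_n^2})+O(h_n)$. The indicator $\IND_{T_n=T^*}\le 1$ only reduces the expression, so the bound transfers to the statement of the proposition.
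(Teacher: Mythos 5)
Your proof is correct, and its overall architecture coincides with the paper's: a telescoping decomposition that swaps one conditional factor at a time, followed by integrating out the extraneous variables using the topological ordering of $T^*$ (so each estimated conditional integrates to at most $1$ on the ``leaf'' side, and the remaining true conditionals along the path to the root collapse to produce $f_{j(k)}$). Where you diverge is the final reduction to histogram rates. The paper first factors the conditional-density difference as a difference of normalized densities, $\left|g_n/f_{n,j(k)} - f_{k,j(k)}/f_{j(k)}\right|f_{j(k)}$, and invokes Lemma~\ref{lem:L1_bound} (the normalized-$L_1$ comparison with the $\max\{\tilde g,\tilde h\}$ trick) to arrive at $2\int|g_n-f_{k,j(k)}|$, so only the bivariate histogram rate appears. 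You instead use the exact algebraic identity
\[
f_{n,k\mid j(k)}\,f_{j(k)}-f_{k,j(k)}
= f_{n,k\mid j(k)}\bigl(f_{j(k)}-f_{n,j(k)}\bigr)+\bigl(g_n-f_{k,j(k)}\bigr),
\]
together with $\int f_{n,k\mid j(k)}\,dx_k\le 1$, to bound the error by $\|f_{n,j(k)}-f_{j(k)}\|_1+\|g_n-f_{k,j(k)}\|_1$, one univariate and one bivariate histogram term, each controlled by Lemma~\ref{histrate}. Your route is slightly more elementary---it dispenses with Lemma~\ref{lem:L1_bound} entirely and never touches ratios of random quantities---at the small cost of introducing the (dominated) univariate term; both yield the same $O(1/\sqrt{nh_n^2})+O(h_n)$ rate. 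The identity also handles the $0/0=0$ convention transparently: when $f_{n,j(k)}$ vanishes on a cell, both $f_{n,k\mid j(k)}$ and $g_n$ vanish there, and the identity and the bound $\int f_{n,k\mid j(k)}\,dx_k\le 1$ remain valid. One very small imprecision: you assert $\int f_{n,i\mid j(i)}(x_i\mid\cdot)\,dx_i=1$, which fails on cells with no data; it is $\le 1$, which is all that is needed (the paper makes the same harmless simplification).
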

\begin{proof}
Here we apply the rate of convergence for the histogram estimation rule
given in Lemma~\ref{histrate} below.
Recall that we renumber the vertex set $V=\{ 1,\dots ,d\}$ such that for any $1\le i<d$, the vertex subset $\{ i,\dots ,d\}$ corresponds to a subtree of $T^*$ with $i$ being a leaf and $j(i)>i$, and so $d$ is the root of the subtree and the subtree vertices are ordered by their distance from the root.
For the sake of simplicity we use the abbreviation
\begin{align*}
f(x_{i}\mid x_{j(i)})
&=
f_{i\mid j(i)}(x_{i}\mid x_{j(i)})
\end{align*}
and
\begin{align*}
f_{n}(x_{i}\mid x_{j(i)})
&=
f_{n,i\mid j(i)}(x_{i}\mid x_{j(i)}).
\end{align*}
If a void product is defined to be $1$, then we have the decomposition
\begin{align*}
f_n(\bx)-f_{T^*}(\bx)
&=
\prod_{i=1}^{d-1}f_n(x_{i}\mid x_{j(i)})f_n(x_d)-\prod_{i=1}^{d-1}f(x_{i}\mid x_{j(i)})f(x_d)\\
&=
\sum_{k=1}^{d-1}
\prod_{i=1}^{k-1}f_n(x_{i}\mid x_{j(i)})[f_n(x_{k}\mid x_{j(k)})-f(x_{k}\mid x_{j(k)})]\prod_{i=k+1}^{d-1}f(x_{i}\mid x_{j(i)})f(x_d)\\
&\quad +
\prod_{i=1}^{d-1}f_n(x_{i}\mid x_{j(i)})[f_n(x_d)-f(x_d)].
\end{align*}
Thus,
\begin{align*}
|f_n(\bx)-f_{T^*}(\bx)|
&\le
\sum_{k=1}^{d-1}
\prod_{i=1}^{k-1}f_n(x_{i}\mid x_{j(i)})|f_n(x_{k}\mid x_{j(k)})-f(x_{k}\mid x_{j(k)})|\prod_{i=k+1}^{d-1}f(x_{i}\mid x_{j(i)})f(x_d)\\
&\quad +
\prod_{i=1}^{d-1}f_n(x_{i}\mid x_{j(i)})|f_n(x_d)-f(x_d)|.
\end{align*}
For $k\le d-1$, we have that
\begin{align*}
&\int \dots \int
\prod_{i=1}^{k-1}f_n(x_{i}\mid x_{j(i)})|f_n(x_{k}\mid x_{j(k)})-f(x_{k}\mid x_{j(k)})|\prod_{i=k+1}^{d-1}f(x_{i}\mid x_{j(i)})f(x_d)dx_1\dots dx_d\\
&=
\int \dots \int
|f_n(x_{k}\mid x_{j(k)})-f(x_{k}\mid x_{j(k)})|\prod_{i=k+1}^{d-1}f(x_{i}\mid x_{j(i)})f(x_d)dx_k\dots dx_d,
\end{align*}
while
\begin{align*}
\int \dots \int
\prod_{i=1}^{d-1}f_n(x_{i}\mid x_{j(i)})|f_n(x_d)-f(x_d)|dx_1\dots dx_d
&=
\int |f_n(x_d)-f(x_d)|dx_d.
\end{align*}
This last term is easier, because according to the rate of convergence theorem of the standard histogram the conditions of the theorem imply
\begin{align*}
\EXP \int |f_n(x_d)-f(x_d)|dx_d=O\left(1/\sqrt{nh_n}\right)+O(h_n),
\end{align*}
(cf. Lemma \ref{histrate} below).
The problem left is to show that for $k\le d-1$,
\begin{align*}
& \int \dots \int
\EXP\{|f_n(x_{k}\mid x_{j(k)})-f(x_{k}\mid x_{j(k)})|\}\prod_{i=k+1}^{d-1}f(x_{i}\mid x_{j(i)})f(x_d)dx_k\dots dx_d\\
%&\le
%\sup_{x_{j(k)}}\int
%\EXP\{|f_n(x_{k}\mid x_{j(k)})-f(x_{k}\mid x_{j(k)})|\}dx_k\\
&=O\left(1/\sqrt{nh_n^2}\right)+O(h_n)
\end{align*}
By the ordering of the variables, $k < j(k)$ and the unique path from the root $d$ up to vertex $j(k)$ contains only vertices from $\{j(k)+1,\dots,d\}$, ordered by their distance from $d$.
Let $d = j_1 > j_2 >\dots > j_r = j(k)$ be this path.
Then,
\begin{align}
\nonumber
& \int \dots \int
\EXP\{|f_n(x_{k}\mid x_{j(k)})-f(x_{k}\mid x_{j(k)})|\}\prod_{i=k+1}^{d-1}f(x_{i}\mid x_{j(i)})f(x_d)dx_k\dots dx_d
\\
\nonumber
&=
\int \dots \int
\EXP\{|f_n(x_{k}\mid x_{j(k)})-f(x_{k}\mid x_{j(k)})|\}
 dx_k
\\
\nonumber
&\qquad\qquad \times
%\prod_{j=j_1}^{j_r}
f(x_{j_r} \mid x_{j_{r-1}}) f(x_{j_{r-1}} \mid x_{j_{r-2}}) \dots f(x_{j_{2}} \mid x_{j_1}) f(x_{j_{1}})
dx_{j_1} \dots dx_{j_r}
\\
%\label{eq:A}
\nonumber
&=
\int \int
\EXP\{|f_n(x_{k}\mid x_{j(k)})-f(x_{k}\mid x_{j(k)})|\}
f(x_{j(k)})  dx_k dx_{j(k)}
\\
\label{eq:A}
&=
\int
\EXP\left\{\int|f_n(x_{k}\mid x_{j(k)})-f(x_{k}\mid x_{j(k)})|
f(x_{j(k)}) dx_k\right\}
 dx_{j(k)}.
\end{align}
Lemma \ref{lem:L1_bound} below implies that for any $x_{j(k)}$,
\begin{align*}
\int |f_n(x_{k}\mid x_{j(k)})-f(x_{k}\mid x_{j(k)})|f(x_{j(k)}) dx_k
& =
\int
\left|\frac{g_n(x_{k}, x_{j(k)})}{f_n(x_{j(k)})}-\frac{f(x_{k}, x_{j(k)})}{f(x_{j(k)})}\right| f(x_{j(k)}) dx_k
\\
&\leq
2\int \frac{|g_n(x_{k}, x_{j(k)})- f(x_{k}, x_{j(k)})| f(x_{j(k)})}{\max\{f_n(x_{j(k)}),f(x_{j(k)})\}} dx_k
\\
&\leq
2\int |g_n(x_{k}, x_{j(k)})- f(x_{k}, x_{j(k)})| dx_k,
\end{align*}
where we used the fact that
\[
\frac{f(x_{j(k)})}{\max\{f_n(x_{j(k)}),f(x_{j(k)})\}}\leq 1.
\]
Thus,
\eqref{eq:A} is bounded from above by
\begin{align*}
2\int
\EXP\left\{
\int |g_n(x_{k}, x_{j(k)})- f(x_{k}, x_{j(k)})|
\right\}
dx_k dx_{j(k)}
& = O\left(1/\sqrt{nh_n^2}\right)+O(h_n),
\end{align*}
where the last equality is by Lemma \ref{histrate}.
\end{proof}

\begin{lemma}
\label{histrate}
\citep{BeGy98,DeGy85}.
%(Beirlant and Gy\"orfi \cite{BeGy98}, and Devroye and Gy\"orfi \cite{DeGy85}.)
For the notation of Lemma \ref{hist}, assume that $h_n\to 0$ and $nh_n^d\to\infty$. If $\bX$ is bounded and $f$ is Lipschitz continuous, then
\begin{align*}
\EXP\left\{\int |f_{n,h_n}(\bx)-f(\bx)|d\bx\right\}
&=
O\left(1/\sqrt{nh_n^d}\right)+O(h_n).
\end{align*}
\end{lemma}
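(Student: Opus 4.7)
The plan is to apply the standard bias-variance decomposition to the histogram estimator, handle the two terms separately, and then combine them by the triangle inequality. Write
\[
f_{n,h_n}(\bx)-f(\bx) = \bigl(f_{n,h_n}(\bx)-\bar f_{h_n}(\bx)\bigr) + \bigl(\bar f_{h_n}(\bx)-f(\bx)\bigr),
\]
where, as in Lemma~\ref{hist}, $\bar f_{h_n}(\bx)=\mu(A_n(\bx))/h_n^d$ is the expectation of $f_{n,h_n}(\bx)$.

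First I would handle the bias term. For any $\bx$ in the (bounded) support $S$, the cell $A_n(\bx)$ has diameter $\sqrt d\,h_n$, so the Lipschitz assumption yields
\[
|\bar f_{h_n}(\bx)-f(\bx)| = \Bigl|\tfrac{1}{h_n^d}\!\int_{A_n(\bx)}\!(f(\by)-f(\bx))\,d\by\Bigr| \le L\sqrt d\,h_n.
\]
Since $\bar f_{h_n}$ is supported on a bounded enlargement of $S$ (of Lebesgue measure $O(1)$), integration gives $\int|\bar f_{h_n}(\bx)-f(\bx)|\,d\bx = O(h_n)$.

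Next I would treat the stochastic term. Since $n\mu_n(A_n(\bx))$ is binomial,
\[
\Var\bigl(f_{n,h_n}(\bx)\bigr) = \frac{\mu(A_n(\bx))(1-\mu(A_n(\bx)))}{n h_n^{2d}} \le \frac{\bar f_{h_n}(\bx)}{n h_n^d},
\]
and hence by Jensen $\EXP|f_{n,h_n}(\bx)-\bar f_{h_n}(\bx)| \le \sqrt{\bar f_{h_n}(\bx)/(n h_n^d)}$. Integrating and using that $\bar f_{h_n}$ is constant on each cell $A$ of volume $h_n^d$,
\[
\int \sqrt{\bar f_{h_n}(\bx)}\,d\bx = h_n^{d/2}\sum_{A}\sqrt{\mu(A)},
\]
where the sum runs only over the $N=O(h_n^{-d})$ cells meeting the support of $\bar f_{h_n}$. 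Cauchy-Schwarz gives $\sum_A\sqrt{\mu(A)}\le\sqrt{N\sum_A\mu(A)}=\sqrt N$, so
\[
\EXP\int|f_{n,h_n}(\bx)-\bar f_{h_n}(\bx)|\,d\bx \le \frac{1}{\sqrt{nh_n^d}}\cdot h_n^{d/2}\sqrt N = O\bigl(1/\sqrt{nh_n^d}\bigr).
\]

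Combining the two bounds via the triangle inequality (and Fubini to push the expectation inside) yields the claimed rate. The only real subtlety is the variance integral: if one did not know that $f$ has bounded support, $\int\sqrt{\bar f_{h_n}}\,d\bx$ could diverge, so the boundedness assumption is what makes the Cauchy-Schwarz step tight; everything else is routine.
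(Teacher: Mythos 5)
Your proof is correct, and it is essentially the standard bias--variance argument that underlies this result in the cited references (the paper itself does not reprove the lemma but refers to Devroye--Gy\"orfi and Beirlant--Gy\"orfi). The bias bound via the Lipschitz condition, the variance bound via the binomial count in each cell, the Jensen step $\EXP|Z-\EXP Z|\le\sqrt{\Var Z}$, and the Cauchy--Schwarz over the $O(h_n^{-d})$ occupied cells are exactly the ingredients used there, and your observation that bounded support is what keeps $\int\sqrt{\bar f_{h_n}}$ finite is the right thing to flag.
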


\begin{lemma}
\label{lem:L1_bound}
Let $g,h:\R\to\R$ be nonnegative
%functions
%$g,q:\R^2\to\R$
with
$\tilde g = \int g(x)dx < \infty$
and $\tilde h = \int h(x)dx < \infty$.
%\begin{align*}
%\tilde g = \int g(x)dx < \infty
%\qquad\text{and}\qquad
%\tilde h = \int h(x)dx < \infty.
%\end{align*}
Then,
\begin{align*}
\int \left\vert \frac{g(x)}{
%\int g(x,y') dy'
\tilde g
} - \frac{h(x)}{
%\int q(x,y') dy'
\tilde h
} \right\vert dx
\leq
2\int \frac{\left\vert g(x) - h(x)\right\vert}{\max\{\tilde g,\tilde h\}} dx.
%\frac{2}{\max\{\tilde g,\tilde h\}}\int {\left\vert g(x) - h(x)\right\vert} dx.
\end{align*}
\end{lemma}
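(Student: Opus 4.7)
The plan is to prove the lemma by a direct algebraic decomposition followed by the triangle inequality and a routine bound on the difference of the normalizing constants. Without loss of generality I would assume $\tilde g \ge \tilde h$, so that $\max\{\tilde g,\tilde h\}=\tilde g$; the general case then follows by the symmetry of the left-hand side in $(g,h)$.

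The key step is the decomposition
\begin{align*}
\frac{g(x)}{\tilde g} - \frac{h(x)}{\tilde h}
= \frac{g(x)-h(x)}{\tilde g} + h(x)\left(\frac{1}{\tilde g}-\frac{1}{\tilde h}\right)
= \frac{g(x)-h(x)}{\tilde g} - \frac{h(x)(\tilde g - \tilde h)}{\tilde g\,\tilde h},
\end{align*}
obtained by adding and subtracting $h(x)/\tilde g$. I would then apply the triangle inequality inside the integral and integrate the two pieces separately. The first piece gives $\tilde g^{-1}\int |g(x)-h(x)|\,dx$ directly. For the second piece, because $h$ is nonnegative, $\int h(x)\,dx=\tilde h$, and $\tilde g\ge \tilde h$, the integral equals $(\tilde g-\tilde h)/\tilde g$.

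To finish, I would use the trivial observation that
\begin{align*}
\tilde g - \tilde h = \int (g(x)-h(x))\,dx \le \int |g(x)-h(x)|\,dx,
\end{align*}
which converts the second piece into $\tilde g^{-1}\int|g(x)-h(x)|\,dx$ as well. Adding the two contributions yields the factor of $2$ in the stated bound.

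There is no real obstacle here: the argument is a short computation, and the only thing to watch is the symmetry step so that $\max\{\tilde g,\tilde h\}$ (rather than a specific one of the two) appears in the denominator. One could also derive the same bound by writing $\tfrac{g}{\tilde g}-\tfrac{h}{\tilde h}=\tfrac{g\tilde h-h\tilde g}{\tilde g\tilde h}$ and splitting $g\tilde h - h\tilde g = (g-h)\tilde h + h(\tilde h-\tilde g)$, which is essentially the same decomposition from a different starting point.
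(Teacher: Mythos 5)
Your proof is correct and takes essentially the same route as the paper: split via an added-and-subtracted cross term, apply the triangle inequality, and bound $|\tilde g - \tilde h|$ by $\int|g-h|\,dx$. The only cosmetic difference is that you normalize by $h/\tilde g$ under a WLOG assumption $\tilde g\ge \tilde h$, whereas the paper adds and subtracts $g/\tilde h$ without a WLOG and then invokes symmetry at the end to replace $\tilde h$ with $\max\{\tilde g,\tilde h\}$; the two presentations are interchangeable.
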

\begin{proof}
This was proven for the $\ell_2$
norm in \citet[Lemma 3.1]{LeeGT14},
but the argument works for any norm.
We have
\begin{align*}
\int \left\vert \frac{g(x)}{\tilde g} - \frac{h(x)}{\tilde h} \right\vert dx
&\leq
\int \left\vert \frac{g(x)}{\tilde g} - \frac{g(x)}{\tilde h} \right\vert dx
+\int \left\vert \frac{g(x)}{\tilde h} - \frac{h(x)}{\tilde h} \right\vert dx
\\
&=
\left\vert \frac{1}{\tilde g} - \frac{1}{\tilde h} \right\vert
\tilde g
+
\frac{1}{\tilde h}
\int \left\vert g(x)-h(x) \right\vert dx
\\
&=
 \frac{\left\vert \tilde h-\tilde g\right\vert}{\tilde h}
%- \frac{\tilde g}{\tilde h}
+
\frac{1}{\tilde h}
\int \left\vert g(x)-h(x) \right\vert dx
\\
&\leq \frac{2}{\tilde h} \int \left\vert g(x)-h(x) \right\vert dx.
\end{align*}
By symmetry,
\begin{align*}
\int \left\vert \frac{g(x)}{\tilde g} - \frac{h(x)}{\tilde h} \right\vert dx
&\leq \frac{2}{\tilde g} \int \left\vert g(x)-h(x) \right\vert dx
\end{align*}
as well. The claim now follows.
%\begin{align*}
%\frac{2}{\max\{\tilde g,\tilde h\}}\int {\left\vert g(x) - h(x)\right\vert} dx.
%\end{align*}
\end{proof}

\section{Acknowledgment}
We thank the anonymous referees for a number of suggestions and corrections,
including catching a substantive mistake in an earlier version
and suggesting a way of fixing it.

\bibliographystyle{plainnat}
\bibliography{refs}

\end{document}